\documentclass[a4paper, 11pt]{article}
\usepackage{amsmath, amsfonts, amsthm, latexsym}
\usepackage{graphicx}
%\graphicspath{{figures/}}
%%%%%%%%%%%%%%%%%%%%%%%%%%%%%%%%%%%%%%%%%%
%%  Macros                              %%

%%%%%%%%%%%%%%%%%%%%%%%%%%%%%%%%%%%%%%%%%%
 % PANTONE RED
 % PANTONE PROCESS-BLACK

%% dimensions
\setlength{\hoffset}{-1.5cm} \setlength{\voffset}{-1.5cm}
\setlength{\textwidth}{15cm} \setlength{\textheight}{23cm}
\setlength{\parindent}{1cm}
%% format

\newtheorem{theorem}{Theorem}[section]

\newtheorem{proposition}[theorem]{Proposition}

\newtheorem{corollary}[theorem]{Corollary}

%tilde

% pedici ed indici in liberta'

%lambda in pedice

%overline

%underline

%cal

%tilde

%hat

%% symboles de base, logique

    \newcommand{\T}{{\mathbb T}}
 
\newcommand{\R}{{\mathbb R}}

\begin{document}
\title{An ergodic problem for Mean Field Games:\\qualitative properties and numerical simulations}
\author{Simone Cacace\footnotemark[1],\,\, Fabio Camilli\footnotemark[2],\,\, Annalisa Cesaroni\footnotemark[3]\,\, and Claudio Marchi\footnotemark[4] }
\date{}
\vskip -1.5cm

\maketitle

\footnotetext[1]
{
 Dip. di Matematica e Fisica, Universit{\`a}  degli Studi Roma Tre, Largo San Leonardo Murialdo 1, 00146 Roma, Italy, 
 ({\tt e-mail: cacace@mat.uniroma3.it})
}
\footnotetext[2]
{
 Dip. di Scienze di Base e Applicate per l'Ingegneria,  ``Sapienza" Universit{\`a}  di Roma, 
 via Scarpa 16, 00161 Roma, Italy, ({\tt e-mail:camilli@dmmm.uniroma1.it})
}
\footnotetext[3]
{
 Dip. di Scienze Statistiche, Universit\`a di Padova, via Cesare Battisti 241-243, 35121 Padova, Italy 
 ({\tt e-mail: annalisa.cesaroni@unipd.it})
}
\footnotetext[4]
{
 Dip. di Ingegneria dell'Informazione, Universit\`a di Padova, via Gradenigo 6/B, 35131 Padova, Italy 
 ({\tt e-mail: claudio.marchi@unipd.it})
}
\begin{abstract}
This paper is devoted to some qualitative descriptions and some numerical results for ergodic Mean Field Games systems which arise, e.g., in the homogenization with a small noise limit. We shall consider either power type potentials or logarithmic type ones. In both cases, we shall establish some qualitative properties of the effective Hamiltonian~$\bar H$ and of the effective drift~$\bar b$. In particular we shall provide two cases where the effective system keeps/looses the Mean Field Games structure, namely where $\nabla_P \bar H(P,\alpha)$ coincides or not with $\bar b(P, \alpha)$.

On the other hand, we shall provide some numerical tests validating the aforementioned qualitative properties of~$\bar H$ and~$\bar b$. In particular, we provide a numerical estimate of the discrepancy $\nabla_P \bar H(P,\alpha)-\bar b(P, \alpha)$.
\end{abstract}
 \begin{description}
\item [\textbf{ AMS subject classification}: ]  35B27, 35B30, 35B40, 35K40, 35K59, 65M06, 91A13 
  \item [\textbf{Keywords}: ] mean field games, periodic homogenization, small noise limit, ergodic problems, continuous dependence of solution on parameters, finite difference schemes
 \end{description}

\section{Introduction} 
This paper is devoted to some qualitative descriptions and some numerical results of ergodic problems arising in the homogenization of Mean Field Games (briefly, MFG) in the small noise limit. Our starting point is the homogenization result stated in~\cite{cdm}. In that paper, the authors tackle the asymptotic behaviour as $\epsilon\to 0$ of the solution to
 \begin{equation}\label{systemintro}\begin{cases}
-u_t^\epsilon-\epsilon\Delta u^\epsilon +\frac{1}{2}|\nabla u^\epsilon|^2=V\left(\frac{x}{\epsilon}, m^\epsilon \right), & x\in \mathbb{R}^n, t\in (0,T)\\
m^\epsilon_t-\epsilon \Delta m^\epsilon-{\rm div}(m^\epsilon\nabla u^\epsilon)=0,& x\in \mathbb{R}^n, t\in (0,T)
\end{cases}\end{equation}\rm
 with initial/terminal condition $u^0(x,T)=u_0(x)$ and $m^0(x,0)=m_0(x)$ where the potential $V(y,m)$ is $1$-periodic in $y$, increasing in $m$, $C^1$ and {\it bounded}.
It turns out that $(u^\epsilon,m^\epsilon)$ converge (in a suitable sense) to the solution of the effective Cauchy problem 
\begin{equation}\label{systemeffintro}\begin{cases}
-u_t^0+\bar H(\nabla u^0, m^0)=0, & x\in \mathbb{R}^n, t\in (0,T)\\
m^0_t-{\rm div}(m^0\bar b(\nabla u^0, m^0))=0,& x\in \mathbb{R}^n, t\in (0,T)
\end{cases}\end{equation}
 with the same initial/terminal condition. The effective operators $\bar H(P,\alpha)$ and $\bar b(P,\alpha)$ are obtained as follows: for every  $P\in \mathbb{R}^n$ and $\alpha\geq 0$, the value $\bar H(P,\alpha)$ is the (unique) constant for which there exists a solution to the ergodic MFG system:  
\begin{equation}\label{ucorintro} \begin{cases}
(i)   -\Delta u     \!+\!\frac{1}{2}|\nabla u  \!+\!P|^2\!-\!V(y, \alpha m  ) =
 \bar  H(P,\alpha), & y\in\mathbb{T}^n\\
(ii)   -\Delta m  -{\rm div}\left(m \left(\nabla u +P\right)\right)=0, & y\in\mathbb{T}^n\\ (iii)  \int_{\mathbb{T}^n} u =0 \qquad  \int_{\mathbb{T}^n} m =1, & \end{cases}
\end{equation} 
while $\bar b$ is given by  \begin{equation}\label{bbar} \bar b(P,\alpha)= \int_{\mathbb{T}^n} (\nabla u +P) m dy,\end{equation} where $(u,m)$ is the  solution to \eqref{ucorintro}.
Moreover the following relation holds true
\begin{equation}\label{H_P} 
\frac{\partial H}{\partial P_i}(P,\alpha) = \bar b_i(P,\alpha)-\alpha \int_{\T^n}V_m(y,\alpha m) \tilde m_i m dy
\end{equation}
where $V_m=\partial V/\partial m$ and $\left(\tilde u_i,\tilde m_i, \frac{\partial H}{\partial P_i}(P,\alpha) \right)\in C^{2,\gamma}\times W^{1,p}\times \R$ is the solution of the ergodic problem 
\begin{equation}\label{mvarP}
\begin{cases} (i) &  -\Delta\tilde u_i + \nabla\tilde u_i\cdot (\nabla u+P) + (\nabla u+P)\cdot e_i -V_m(y,\alpha m)\alpha\tilde m_i=\frac{\partial H}{\partial P_i}(P,\alpha) \\ (ii)&
-\Delta \tilde m_i-{\rm div}\big((P+\nabla u)\tilde m_i \big) ={\rm div }( m(\nabla \tilde u_i+ e_i)) \\(iii) & \int_{\mathbb{T}^n}\tilde m_i=\int_{\mathbb{T}^n}\tilde u_i=0. \end{cases}
\end{equation} 

It is worth to observe that, for the homogenization of a MFG system with finite noise, the cell problem is {\it decoupled}, see \cite[Sect.3.1.2]{cdm}; hence, from a mathematical point of view, the most interesting case is the small noise one as in \eqref{systemintro}.

Let us recall that the ergodic system \eqref{ucorintro} is not related only to the homogenization problem; actually, it has its own interpretation as the MFG system when agents pay an infinite horizon cost (see \cite{be, ll1, ll}) and it arises also in the study of the asymptotic behaviour as $t\to+\infty$ of evolutive MFG system (see \cite{cllp1, cllp2, cp}). For a general overview on MFG systems and on their applications, we refer the reader to the monographs \cite{noteachdou, bf, notecard, clld, cc, gn,gbook,notegomes} and references therein.

The aim of this paper is to study the ergodic systems \eqref{ucorintro} and \eqref{mvarP} under more general assumptions, especially we shall consider {\it unbounded} potential of the form
\begin{enumerate}
\item Power type nonlinearities: $V(y,m)=v(y)+ m^q$ for some $q>0$;
\item Logarithmic nonlinearities: $V(y,m)= v(y)+\log m$ \end{enumerate} 
where $v:\T^n\to \R$ is Lipschitz continuous and $1$-periodic. Without any loss of generality, we shall assume that $v\geq 0$. 
Let us stress that our methods apply to regular solutions to \eqref{ucorintro},  that is $(u,m)\in C^{2,\gamma}(\T^n)\times W^{1,p}(\T^n)$, for all $\gamma\in (0,1)$ and for all $p>1$. As a matter of fact, by the regularity of the Hamiltonian, under our assumptions, the solution to \eqref{ucorintro} belongs to $C^{2,\gamma}(\T^n)\times C^{2,\gamma}(\T^n)$, for all $\gamma\in (0,1)$.

An interesting feature is that the limit system \eqref{systemeffintro} may loose the MFG structure, namely $\nabla_P \bar H(P,\alpha)$ may not coincide 
with $\bar b(P, \alpha)$. This feature was already pointed out in \cite[Sect.6.1]{cdm} for the potential $V(y,m)=v(y)+m$; in Corollary \ref{coro1} we shall extend 
it to more general cases of power type nonlinearities. On the other hand, in Theorem \ref{thmderivatives11} we shall show $\nabla_P \bar H(P,\alpha)=\bar b(P, \alpha)$  
for MFG systems with logarithmic potentials. We will finally explore these results numerically, in particular by computing the discrepancy $\nabla_P \bar H- \bar b$. 

\section{Mean field games with power nonlinearities}\label{sect:power}
In this section we consider potential of the form $V(y,m)=v(y)+ m^q$ with $q>0$. %and $v\geq 0$ Lipschitz and periodic such that
In this case, the ergodic problem \eqref{ucorintro} reads 
\begin{equation}\label{s2}
 \begin{cases}
(i) -\Delta u +\frac{|\nabla u+P|^2}{2} - v(y)- \alpha^q m^q=\bar H(P, \alpha), & y\in\mathbb{T}^n\\
(ii) -\Delta m-{\rm div}  ( m (\nabla u+P))=0, & y\in\mathbb{T}^n\\ (iii)  \int_{\mathbb{T}^n} u =0 \qquad  \int_{\mathbb{T}^n} m =1. & \end{cases}
\end{equation}  
Throughout this section we shall assume
\begin{equation*}q>0\quad\textrm{if }n\leq 4,\qquad 0<q\leq \frac{2}{n-2}\quad \textrm{if } n>4.
\end{equation*}
We collect some qualitative properties of the effective operators. 
\begin{proposition}\label{prop1} There hold 
\begin{itemize}
\item[(i)] For every $P\in\R^n$, $\alpha\geq 0$ there exists a unique constant $\bar H(P,\alpha)$ such that \eqref{s2} admits a solution $(u,m)$. Moreover this solution is unique, 
$(u,m)\in C^{2,\gamma}(\T^n)\times W^{1,p}(\T^n)$ for every $\gamma\in (0,1)$ and $p>1$ and $m>0$.
\item[(ii)] $\bar H(P,\alpha)$  is   decreasing in $\alpha$.
\item[(iii)]$\bar H(P,\alpha)$ is coercive in $P$, with quadratic growth, in particular there exists a constant $R_q\geq 1$ depending only on $q$ such that 
\[ \frac{|P|^2}{2}-(1+R_q)\int_{\T^n} v(y)dy-R_q\alpha^q\leq   \bar H(P,\alpha)\leq  \frac{|P|^2}{2 }-\alpha^q.\]

\item[(iv)] There hold
\begin{equation*}%\label{limiteh}
\lim_{|P|\to +\infty}\frac{\bar H(P, \alpha)}{|P|^2} =\frac{1}{2}\qquad \text{and}\qquad \lim_{|P|\to +\infty}\frac{|\bar b(P, \alpha)-P |}{|P|}=0,\end{equation*} 
locally uniformly for $\alpha \in [0,+\infty)$.\end{itemize}
\end{proposition}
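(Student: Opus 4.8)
The plan is to take the four assertions in the order (i), (iii), (iv), (ii), because the quantitative bounds of (iii) are the engine that drives (iv) and they clarify what a priori control is available. For existence in (i) I would run a Schauder fixed-point scheme: given a density $m$ in a convex subset of $W^{1,p}(\T^n)$, solve the viscous ergodic equation (i) for the unique pair $(u,\bar H)$ normalized by $\int_{\T^n}u=0$, insert $u$ into the linear Fokker--Planck equation (ii), and solve for the unique probability density $\tilde m>0$ (positivity by the strong maximum principle). The map $m\mapsto\tilde m$ is continuous and compact, and the a priori estimates below confine it to a ball; the growth restriction on $q$ is exactly what makes $m^q$ lie in a space for which elliptic regularity returns $u\in C^{2,\gamma}$, closing the bootstrap $m\in W^{1,p}\Rightarrow m^q\in L^{s}\Rightarrow u\in C^{2,\gamma}\Rightarrow m\in C^{2,\gamma}$. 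Uniqueness is the Lasry--Lions computation: subtracting two copies of (i) tested against $m_1-m_2$, subtracting two copies of (ii) tested against $u_1-u_2$, and adding, one gets $\tfrac12\int_{\T^n}(m_1+m_2)|\nabla(u_1-u_2)|^2+\alpha^q\int_{\T^n}(m_1^q-m_2^q)(m_1-m_2)=0$, both terms nonnegative by convexity of the Hamiltonian and monotonicity of $t\mapsto t^q$; hence $m_1=m_2$, and then $u_1=u_2$ and the two constants coincide.

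For (iii) the two basic identities come from testing (i) against the constant $1$ and against $m$. Integrating (i) gives $\bar H=\tfrac12\int_{\T^n}|\nabla u|^2+\tfrac12|P|^2-\int_{\T^n}v-\alpha^q\int_{\T^n}m^q$, while testing (i) against $m$ and using (ii) as the adjoint equation yields $\bar H=-\tfrac12\int_{\T^n}m|\nabla u+P|^2+P\cdot\bar b-\int_{\T^n}mv-\alpha^q\int_{\T^n}m^{q+1}$. For the \emph{upper} bound I would use the weighted Cauchy--Schwarz inequality $\int_{\T^n}m|\nabla u+P|^2\ge|\bar b|^2$, complete the square $-\tfrac12|\bar b|^2+P\cdot\bar b\le\tfrac12|P|^2$, drop $\int_{\T^n}mv\ge0$, and invoke Jensen ($\int_{\T^n}m^{q+1}\ge1$) to reach $\bar H\le\tfrac12|P|^2-\alpha^q$. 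For the \emph{lower} bound I combine the two identities into the key a priori estimate $\tfrac12\int_{\T^n}|\nabla u|^2+\alpha^q\int_{\T^n}m^{q+1}\le\int_{\T^n}v+\alpha^q\int_{\T^n}m^q$, then control $\alpha^q\int_{\T^n}m^q\le\alpha^q(\int_{\T^n}m^{q+1})^{q/(q+1)}$ by H\"older and absorb it by Young's inequality; this produces bounds on $\int_{\T^n}|\nabla u|^2$ and $\alpha^q\int_{\T^n}m^{q+1}$ that are \emph{independent of} $P$, and feeding them back into the first identity yields the constant $R_q$. This uniform-in-$P$ estimate is the main obstacle of the proposition.

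Statement (iv) is then almost free. The first limit follows by dividing the two-sided bound of (iii) by $|P|^2$ and letting $|P|\to\infty$, the remainders being uniform for $\alpha$ in a compact set. For the second limit, note $\bar b-P=\int_{\T^n}m\nabla u$, so $|\bar b-P|\le(\int_{\T^n}m|\nabla u|^2)^{1/2}$ by Cauchy--Schwarz with $\int_{\T^n}m=1$; expanding $|\nabla u|^2=|\nabla u+P|^2-2P\cdot(\nabla u+P)+|P|^2$ in the second identity gives the exact relation $\int_{\T^n}m|\nabla u|^2=|P|^2-2\bar H-2\int_{\T^n}mv-2\alpha^q\int_{\T^n}m^{q+1}$, and inserting the lower bound on $\bar H$ from (iii) shows $\int_{\T^n}m|\nabla u|^2$ stays bounded as $|P|\to\infty$; hence $|\bar b-P|/|P|\to0$ locally uniformly in $\alpha$.

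Finally, for (ii) I would use the change of unknown $\mu=\alpha m$: since $\alpha^qm^q=\mu^q$ and (ii) is homogeneous, $(u,\mu,\bar H)$ solves the same system with fixed coupling $\mu^q$ but total mass $\int_{\T^n}\mu=\alpha$, so $\bar H(P,\alpha)=\hat H(P,\alpha)$, where $\hat H(P,c)$ denotes the ergodic constant of the mass-$c$ problem. This recasts monotonicity in $\alpha$ as monotonicity of $\hat H$ in the mass $c$. I would then exploit the potential structure: $-\hat H(P,c)$ is the Lagrange multiplier of the constraint $\int_{\T^n}\mu=c$ for the convex minimization of the action $\mathcal W(\mu)+\int_{\T^n}v\mu+\tfrac1{q+1}\int_{\T^n}\mu^{q+1}$, whose value is convex in $c$ (the kinetic term $\mathcal W$ is jointly convex as a perspective functional and $\mu\mapsto\mu^{q+1}$ is convex); convexity of the value forces the multiplier to be monotone, so $\hat H$, and hence $\bar H$, is non-increasing. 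The delicate point is justifying this variational characterization and the differentiability needed to identify the multiplier; an alternative is to differentiate the system in $\alpha$, use that $m$ solves the adjoint equation to eliminate the $\nabla\dot u$ contributions, and control the remaining term via the linearized monotonicity identity.
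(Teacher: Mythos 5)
Your treatment of (iii) and (iv) is essentially the paper's. You derive the same two identities (integrating the HJB equation over $\T^n$, and cross-testing the two equations with $m$ and $u$), combine them into the same key a priori estimate $\frac12\int_{\T^n}|\nabla u|^2(m+1)+\alpha^q\int_{\T^n} m^{q+1}\le \int_{\T^n}v+\alpha^q\int_{\T^n}m^q$, and close with H\"older; your Young-inequality absorption replaces the paper's elementary lemma that $A\le B+A^{q/(q+1)}$ forces $A\le R_q(B+1)$, and in (iv) you bound $\int_{\T^n}m|\nabla u|^2$ by feeding the lower bound on $\bar H$ back into the identity, whereas the paper reads it off directly from $\int_{\T^n}\frac12|\nabla u|^2(m+1)\le\int_{\T^n}v$ — cosmetic differences only. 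For (i) the paper simply cites the existence/uniqueness literature (Gomes--Patrizi, Cirant, etc.); your Schauder fixed-point sketch plus the Lasry--Lions uniqueness computation is a reasonable self-contained substitute, though it is only a sketch of what is a nontrivial existence theorem.

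The genuine divergence is (ii), and it is also where your argument is weakest. The paper's proof is a one-line Lasry--Lions cross-testing: since the Fokker--Planck equation is linear, $\mu_j=\alpha_j m_j$ still solves it with mass $\alpha_j$, and testing the difference of the two HJB equations against $\mu_1-\mu_2$ yields $(\bar H(P,\alpha_1)-\bar H(P,\alpha_2))(\alpha_1-\alpha_2)=-\frac12\int_{\T^n}(\mu_1+\mu_2)|\nabla(u_1-u_2)|^2-\int_{\T^n}(\mu_1^q-\mu_2^q)(\mu_1-\mu_2)\le 0$, strictly when $\alpha_1\ne\alpha_2$ because the two masses differ and $t\mapsto t^q$ is strictly increasing. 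You already perform exactly this computation for uniqueness in (i), and you already make the substitution $\mu=\alpha m$ — the conclusion is one step away — yet you detour through the variational characterization of $-\bar H$ as the Lagrange multiplier of the mass constraint and convexity of the value function. That route is legitimate in principle, but it carries real unproved overhead (the identification of the multiplier with $-\bar H$, the sign convention, and the differentiability of the value function), and as stated it only delivers that $\bar H$ is non-increasing in $\alpha$, not decreasing; strictness would require an extra argument. Replace (ii) by the cross-testing identity you already have in hand.
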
 
\begin{proof}
(i) This existence result can be found in \cite[Thm 7.1]{gomespatrizi}, \cite[Theorem 1.4]{cirant}, see also \cite{pv} and \cite{cc}. 
\\(ii) Following the same argument as in  \cite[Proposition 3]{cdm}
we get that for $\alpha_1, \alpha_2\geq 0$, 
\[(\bar H(P,\alpha_1)-\bar H(P,\alpha_2))(\alpha_1-\alpha_2)=-\int_{\T^n} (m_1^q-m_2^q)(m_1-m_2)dy%=-\int_{\T^n} (m_1-m_2)^2 Q(m_1,m_2)dy
\]
where $m_1$ and $m_2$ are respectively the solution to \eqref{s2} with $\alpha=\alpha_1$ and $\alpha=\alpha_2$. %and $Q(m_1, m_2)>0$ for all $y$.
Since $q>0$, we get the statement.
\\(iii)  We multiply equation (i) in \eqref{s2} by $m$, we integrate and we get, recalling the periodicity assumptions  and that $m$ has mean $1$, 
\begin{equation*}%\label{c1dim1}
\bar H(P,\alpha)=\int_{\T^n} -m\Delta u +\frac{1}{2 }\left|\nabla u +P\right|^2 m-\alpha^q m^{q+1} -v(y)mdy. \end{equation*}
We multiply equation (ii) in \eqref{s2}  by $u$, integrate and subtract by the previous one to get
\begin{equation}\label{c2dim1}\bar H(P,\alpha)=\int_{\T^n} \frac{1}{2 }(|P|^2-|\nabla u|^2)m -\alpha^q m^{q+1} -v(y)mdy.  \end{equation}
Therefore, recalling that $v\geq 0$, and that by Jensen inequality $\int_{\T^n} m^q dy\geq(\int_{\T^n} mdy)^q=1$,  we get 
$\bar H(P,\alpha)\leq  \frac{|P|^2}{2 }-\alpha^q $. 
\\ Integrating the first equation in \eqref{s2}, we obtain 
\begin{eqnarray}\nonumber\bar H(P,\alpha)&=& \int_{\T^n}  \frac{|\nabla u+P|^2}{2} - v(y)- \alpha^q m^q dy
\\ \label{c3dim1} &=& \frac{|P|^2}{2}+\int_{\T^n}  \frac{|\nabla u|^2}{2}- v(y)- \alpha^q m^q dy.
\end{eqnarray}
From this, using  \eqref{c2dim1}, recalling that $m$ has mean $1$, we get 
\begin{equation}\label{e1dim1}\int_{\T^n} \frac{1}{2 } |\nabla u|^2 (m+1)+(m-1) (\alpha^q m^{q} +v(y))dy=0.  \end{equation}
By monotonicity of $m\mapsto \alpha^qm^q$ on $m\geq 0$, we observe that \begin{equation}\label{mon} \alpha^q m^q(m -1) \geq \alpha^q (m  -1)\end{equation}
and then we conclude from \eqref{e1dim1} that 
\begin{equation}\label{e2dim1}\int_{\T^n} \frac{1}{2 } |\nabla u|^2 (m+1)dy\leq - \int_{\T^n} (m-1) v(y)dy\leq  \int_{\T^n}  v(y)dy.  \end{equation}
Integrating \eqref{mon}, we get 
$\int_{\T^n} m^qdy\leq \int_{\T^n} m^{q+1}dy.$ Hence, in \eqref{e1dim1}, we obtain 
\[ 0\leq \int_{\T^n} \alpha^q m^{q+1}-\alpha^q m^q dy\leq    \int_{\T^n} v(y)(1-m) dy\leq \int_{\T^n} v(y)dy. \]
Therefore, by applying H\"older inequality, we obtain
\[\alpha^q \int_{\T^n} m^{q} dy\leq  \alpha^q \int_{\T^n} m^{q+1} dy \leq  \alpha^q  \left(\int_{\T^n} m^{q+1} dy \right)^{\frac{q}{q+1}}+  
 \int_{\T^n}  v(y) dy. \]
Let  us observe that this inequality can be written as $A\leq B+ A^{\frac{q}{q+1}}$. So, if we choose $R_q>1$   such that $R_q\geq 1+ R_q^{\frac{q}{q+1}}$, 
then the solutions to the inequality satisfies $A\leq R_q(B+1)$. 
Hence, we conclude that \begin{equation}\label{lq+1}\alpha^q  \int_{\T^n}  m^{q}\leq \alpha^q\int_{\T^n}  m^{q+1} \leq R_q\left( \int_{\T^n} v(y) dy +\alpha^q\right).\end{equation}
Replacing this inequality in \eqref{c3dim1}, we obtain 
\[ \bar H(P,\alpha) %= \frac{|P|^2}{2}+\int_{\T^n}  \frac{|\nabla u|^2}{2}- v(y)- \alpha^q m^q dy
\geq \frac{|P|^2}{2}-(1+R_q)\int_{\T^n} v(y)dy-R_q\alpha^q.\] 
\\(iv)  The first limit is a direct consequence of item $(iii)$. 
By  definition \eqref{bbar}, we get \[\bar b(P, \alpha)-P =\int_{\T^n}   \nabla u(y) m dy.\] 
 By H\"older's  inequality and \eqref{e2dim1} we get
\[\int_{\T^n}  |\nabla u(y)| m dy\leq  \left(\int_{\T^n}  |\nabla u |^2m dy \right)^{\frac{1}{2}} \left(\int_{\T^n}   m dy \right)^{\frac{1}{2}}\leq \left(\int_{\T^n} v(y) dy \right)^{\frac{1}{2}} .\]
This permits to conclude. 
\end{proof}

Using the properties stated in Proposition \ref{prop1}, we obtain regularity properties of the effective operators, and a relation between them. 
\begin{theorem}\label{thmderivatives1} 
%Let  $q>0$ if  $n\leq 4$ and   $0<q\leq \frac{2}{n-2} $ if $n>4$. 
The maps  $(P,\alpha)\to \bar H(P, \alpha), \bar b(P,\alpha)$ are locally Lipschitz continuous and admit partial derivatives everywhere. 
Moreover, there hold
\begin{eqnarray}\label{nablaP}\nabla_P \bar H(P,\alpha)&=&  \bar b(P,\alpha) -\frac{q}{q+1} \alpha^q \nabla_P \| m\|_{L^{q+1}}^{q+1}
\\ \frac{\partial \bar b}{\partial P_i}\cdot e_i&\geq& 0. \label{bbarrader} \end{eqnarray}
\end{theorem}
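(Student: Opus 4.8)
The plan is to obtain both identities by differentiating the ergodic system in the parameters and reading off the linearized system \eqref{mvarP}; the genuine work lies in justifying that differentiation.

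\emph{Differentiability and local Lipschitz continuity.} I would regard $(u,m,\bar H)$ as an implicit function of $(P,\alpha)$. Writing $\Phi(u,m,\bar H,P,\alpha)$ for the residuals of \eqref{s2}(i)--(ii) together with the two normalizations in \eqref{s2}(iii), I would check that $\Phi$ is $C^1$ between suitable H\"older and Sobolev spaces; here Proposition \ref{prop1}(i) is essential, since the regularity $(u,m)\in C^{2,\gamma}\times W^{1,p}$ and the strict positivity $m\geq\delta>0$ on $\T^n$ make both $m\mapsto\alpha^q m^q$ and its derivative $q\alpha^q m^{q-1}$ smooth along the solution (even for $q<1$). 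The partial derivative of $\Phi$ with respect to $(u,m,\bar H)$ at a solution is exactly the linear operator in \eqref{mvarP} with $V_m(y,\alpha m)=q\alpha^{q-1}m^{q-1}$. The crux is that this operator is an isomorphism: uniqueness follows by testing the linearized Hamilton--Jacobi equation against the linearized density and the linearized Kolmogorov equation against the linearized potential, subtracting, and invoking $m>0$ together with the sign $q\alpha^q m^{q-1}\geq0$ (monotonicity of the coupling) to force both perturbations to vanish; existence is then a Fredholm alternative, whose solvability condition fixes the constant $\partial_{P_i}\bar H$. The implicit function theorem then gives $(P,\alpha)\mapsto(u,m,\bar H)\in C^1$, hence $\bar b$ is $C^1$ by \eqref{bbar}, and local Lipschitz continuity follows. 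Establishing this well-posedness of \eqref{mvarP} is the main obstacle.

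\emph{Proof of \eqref{nablaP}.} The $P_i$-derivatives $(\tilde u_i,\tilde m_i,\partial_{P_i}\bar H)$ solve \eqref{mvarP}, so $\alpha V_m(y,\alpha m)=q\alpha^q m^{q-1}$ and \eqref{H_P} specializes to $\partial_{P_i}\bar H=\bar b_i-q\alpha^q\int_{\T^n}m^q\tilde m_i\,dy$ (this is also obtained directly by testing \eqref{mvarP}(i) against $m$ and using \eqref{s2}(ii) to annihilate $\int_{\T^n}(\nabla m+m(\nabla u+P))\cdot\nabla\tilde u_i$). Since $\tilde m_i=\partial_{P_i}m$, differentiating $\|m\|_{L^{q+1}}^{q+1}=\int_{\T^n}m^{q+1}dy$ under the integral sign yields $\partial_{P_i}\|m\|_{L^{q+1}}^{q+1}=(q+1)\int_{\T^n}m^q\tilde m_i\,dy$, and substituting this gives \eqref{nablaP}.

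\emph{Proof of \eqref{bbarrader}.} Writing $\frac{\partial\bar b}{\partial P_i}\cdot e_i=\frac{\partial\bar b_i}{\partial P_i}$, I would differentiate $\bar b_i=\int_{\T^n}(\partial_{y_i}u+P_i)m\,dy$ in $P_i$ to get
\[\frac{\partial\bar b_i}{\partial P_i}=\int_{\T^n}m(\partial_{y_i}\tilde u_i+1)\,dy+\int_{\T^n}(\partial_{y_i}u+P_i)\tilde m_i\,dy.\]
The key identity is produced exactly as above but with the roles reversed: testing \eqref{mvarP}(ii) against $\tilde u_i$ and \eqref{mvarP}(i) against $\tilde m_i$ and subtracting gives
\[\int_{\T^n}(\partial_{y_i}u+P_i)\tilde m_i\,dy=\int_{\T^n}q\alpha^q m^{q-1}\tilde m_i^2\,dy+\int_{\T^n}m|\nabla\tilde u_i|^2\,dy+\int_{\T^n}m\,\partial_{y_i}\tilde u_i\,dy.\]
Substituting, and using the normalization $\int_{\T^n}m=1$ to complete the square, namely $\int m|\nabla\tilde u_i|^2+2\int m\,\partial_{y_i}\tilde u_i+\int m=\int m|\nabla\tilde u_i+e_i|^2$, I arrive at
\[\frac{\partial\bar b_i}{\partial P_i}=\int_{\T^n}q\alpha^q m^{q-1}\tilde m_i^2\,dy+\int_{\T^n}m\,|\nabla\tilde u_i+e_i|^2\,dy\ \geq\ 0,\]
since $m>0$, $\alpha\geq0$ and $q>0$. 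Recognizing the square is the only non-routine step; the rest is bookkeeping.
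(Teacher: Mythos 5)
Your proofs of the two identities are, line for line, the paper's: the solvability/compatibility condition of the linearized system \eqref{s2der} gives $\partial_{P_i}\bar H=\bar b_i-q\alpha^q\int_{\T^n}m^q\tilde m_i\,dy$, the identification $\tilde m_i=\partial_{P_i}m$ converts the last term into $\tfrac{q}{q+1}\alpha^q\partial_{P_i}\|m\|_{L^{q+1}}^{q+1}$, and cross-testing \eqref{s2der}(i)--(ii) against $(\tilde m_i,\tilde u_i)$ followed by completing the square with $\int_{\T^n}m=1$ yields
\[
\frac{\partial \bar b}{\partial P_i}\cdot e_i=\int_{\T^n} q\alpha^q m^{q-1}\tilde m_i^2+m|\nabla\tilde u_i+e_i|^2\,dy\geq 0,
\]
exactly as in the paper. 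The only genuine methodological divergence is how you justify that $(\tilde u_i,\tilde m_i,c_i)$ really are the $P_i$-derivatives: you propose the implicit function theorem, which requires showing that the linearized operator is an isomorphism between suitable spaces (Fredholm alternative plus the duality/monotonicity uniqueness argument you sketch, together with $C^1$ dependence of $m\mapsto m^q$ near the strictly positive corrector, which you rightly flag for $q<1$); the paper instead passes to the limit in the difference quotients $\delta^{-1}(u_i^\delta-u,\,m_i^\delta-m)$ using the a priori bounds \eqref{e2dim1} and \eqref{lq+1}, and then identifies the weak limit with the unique solution of \eqref{s2der} (both steps outsourced to \cite[Lemma 6, Theorem 7]{cdm}). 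The IFT route buys genuine $C^1$ dependence of $(u,m,\bar H)$ on the parameters in one stroke, at the price of setting up the isomorphism carefully; the difference-quotient route needs only compactness plus uniqueness for the linear problem, which is why the paper prefers it. Two small points: your uniqueness sketch for the linearized system silently assumes the coupling weight $q\alpha^q m^{q-1}$ is strictly positive, so the case $\alpha=0$ needs the extra observation that $\tilde u_i=0$ forces $\tilde m_i=0$ through the Kolmogorov equation (the paper sidesteps this by fixing $\alpha>0$ at that stage); and the local Lipschitz continuity of $\bar H$ in \emph{both} variables, which the paper obtains from the a priori bounds following \cite[Proposition 5]{cdm}, would in your scheme require running the same IFT argument for the $\alpha$-derivative as well, which you do not spell out but which is entirely analogous.
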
 
\begin{proof}
The proof of the local Lipschitz continuity of $\bar H$ follows the same arguments of \cite[Proposition 5]{cdm} by using the a priori bounds on $m,\nabla u$ obtained in the previous proof, in particular \eqref{e2dim1} and \eqref{lq+1}. 

We fix $P\in\R^n$, $\alpha>0$; let $(u,m)$ be  the solution to \eqref{s2} associated to $(P,\alpha)$.  
We introduce the following system for all $i=1,\dots, n$ 
\begin{equation}\label{s2der}
\begin{cases} (i) &  -\Delta\tilde u_i + \nabla\tilde u_i\cdot (\nabla u+P) + (\nabla u+P)\cdot e_i -q\alpha^q  m^{q-1}\tilde m_i=c_i(P,\alpha) \\ (ii)&
-\Delta \tilde m_i-{\rm div}\big((P+\nabla u)\tilde m_i \big) ={\rm div }(m(\nabla \tilde u_i+ e_i)) \\
(iii)& \int_{\mathbb{T}^n}\tilde m_i=\int_{\mathbb{T}^n}\tilde u_i=0. \end{cases}
\end{equation} 
Arguing as in \cite[Lemma 6]{cdm}, we get that for every $i$ there exists a unique $c_i(P,\alpha)$ such that this system admits a solution $(\tilde u_i,\tilde m_i)$,
which is regular and unique. Moreover, the compatibility condition for existence of solutions reads
\[c_i(P,\alpha)= \int_{\T^n} (\nabla u+P)_i m- \alpha^q q m^q \tilde m_i dy=\bar b_i(P,\alpha)-q\alpha^q \int_{\T^n} m^q \tilde m_i dy\]
where the latter equality is due to \eqref{bbar}.

We fix now $\delta\in \R$ and define $u_i^\delta, m_i^\delta$ the solution to \eqref{s2} associated to $(P+\delta e_i, \alpha)$. Then it is possible to prove arguing as in \cite[Theorem 7]{cdm} that $\frac{u_i^\delta-u}{\delta},\frac{m_i^\delta-m}{\delta}$ converges as $\delta\to 0$ to a solution to \eqref{s2der} in a suitable sense (weakly in $W^{1,2}$ for $\frac{u_i^\delta-u}{\delta}$ and weakly in $L^2$ for $\frac{m_i^\delta-m}{\delta}$).
This implies by uniqueness that, up to subsequences,  $c_i(P,\alpha)=\lim_{\delta\to 0} \frac{\bar H(P+\delta e_i, \alpha)-\bar H(P,\alpha)}{\delta}$. So relation \eqref{nablaP} is exactly the compatibility condition for existence of solutions to \eqref{s2der}. 
\\ 
Following the same arguments as in \cite[Theorem 11]{cdm}, we get that $\bar b$ is locally Lipschitz continuous and moreover 
\[\frac{\partial \bar b}{\partial P_i}=e_i +\int_{\T^n}\tilde m_i \nabla u + m\nabla \tilde u_i dy.  \]
 \\
We multiply (i) and (ii) in \eqref{s2der}  respectively by $\tilde m_i$ and by $\tilde u_i$, we subtract and we get
\[\int_{\T^n}  \tilde m_i \nabla u \cdot e_i   -q\alpha^q  m^{q-1}\tilde m_i^2 - m(\nabla \tilde u_i +e_i)\cdot \nabla \tilde u_i dy=0.\]
Therefore  
\begin{eqnarray*} \frac{\partial \bar b}{\partial P_i}\cdot e_i&=& 1+\int_{\T^n} \tilde m_i \nabla u\cdot e_i + m\nabla \tilde u_i \cdot e_idy\\
&=&1+\int_{\T^n} q\alpha^q  m^{q-1}\tilde m_i^2 +m(\nabla \tilde u_i +e_i)\cdot \nabla \tilde u_i+ m\nabla \tilde u_i \cdot e_idy\\
&=&1+ \int_{\T^n} q\alpha^q  m^{q-1}\tilde m_i^2 +m|\nabla \tilde u_i +e_i|^2- m dy\\
  &=& \int_{\T^n} q\alpha^q  m^{q-1}\tilde m_i^2 +m|\nabla \tilde u_i +e_i|^2dy\end{eqnarray*}
and relation \eqref{bbarrader} easily follows.
\end{proof} 
\begin{corollary} \label{coro1} 
Under the assumption of Theorem \ref{thmderivatives1}, for $n=1$, there holds
\[\nabla_P \bar H(P,\alpha)\neq \bar b (P,\alpha).\] 
\end{corollary}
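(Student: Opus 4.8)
The plan is to read the discrepancy straight off the identity \eqref{nablaP} of Theorem \ref{thmderivatives1}. Since for $n=1$ one has $P\in\R$ and $\nabla_P=d/dP$, that identity gives $\nabla_P\bar H(P,\alpha)-\bar b(P,\alpha)=-\frac{q}{q+1}\alpha^q\,\frac{d}{dP}\int_{\T}m^{q+1}dy$, so proving the Corollary amounts to exhibiting, for $\alpha>0$, at least one value of $P$ at which $\frac{d}{dP}\int_{\T}m^{q+1}dy\neq0$; equivalently, to showing that $P\mapsto\int_{\T}m^{q+1}dy$ is not constant. The factor $\alpha^q$ already shows the discrepancy vanishes at $\alpha=0$, and when $v$ is constant one has $m\equiv1$ and $\int_{\T}m^{q+1}\equiv1$, so the statement is meaningful only for $\alpha>0$ and a genuinely non-constant $v$, which I assume.

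The key one-dimensional tool is the flux identity obtained by integrating \eqref{s2}(ii) once: $m'+m(u'+P)=j$ for a constant $j$. Writing $(u'+P)=j/m-(\ln m)'$ and integrating over $\T$ using periodicity gives $P=j\int_{\T}m^{-1}dy$, while integrating $m(u'+P)=j-m'$ gives $\bar b(P,\alpha)=\int_{\T}(u'+P)m\,dy=j$. Hence in one dimension $\bar b=j$ and $\int_{\T}m^{-1}dy=P/\bar b$. Together with the a priori bounds already established in the proof of Proposition \ref{prop1}, namely $\int_{\T}|u'|^2dy\le\int_{\T}v\,dy$ from \eqref{e2dim1} and $\alpha^q\int_{\T}m^{q+1}dy\le R_q(\int_{\T}v+\alpha^q)$ from \eqref{lq+1}, both uniform in $P$, this identity drives the asymptotics.

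I would then argue by contradiction: suppose $\int_{\T}m^{q+1}dy\equiv K$ for all $P$. Letting $|P|\to+\infty$ and using Proposition \ref{prop1}(iv), $\bar b/P\to1$, so $\int_{\T}m^{-1}dy=P/\bar b\to1$; since $\int_{\T}m=1$ this yields $\int_{\T}(\sqrt m-1/\sqrt m)^2dy=\int_{\T}m^{-1}dy-1\to0$, i.e.\ $m\to1$ in a strong sense. Feeding this into the flux relation $m(y)=\big(\bar b-m'(y)\big)/(u'(y)+P)$, which forces $m\approx\bar b/(u'+P)\to1$ without concentration, one concludes $\int_{\T}m^{q+1}dy\to1$, whence $K=1$. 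But $\int_{\T}m^{q+1}\ge(\int_{\T}m)^{q+1}=1$, with equality (strict convexity of $t\mapsto t^{q+1}$, $q+1>1$) only if $m\equiv1$; so $m\equiv1$ for every $P$, and at $P=0$ equation \eqref{s2}(i) then reduces to $v\equiv-\bar H-\alpha^q$, contradicting the non-constancy of $v$.

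The main obstacle is precisely the implication $\int_{\T}m^{-1}\to1\Rightarrow\int_{\T}m^{q+1}\to1$. Writing $g(m):=m^{q+1}-1-(q+1)(m-1)\ge0$, one has $\int_{\T}g=\int_{\T}m^{q+1}-1$, but $g(m)$ is not controlled by $(m-1)^2/m$ for large $m$ (the ratio grows like $m^q$), so one must rule out concentration of $m$ at the critical exponent $q+1$ as $|P|\to\infty$. This needs a uniform-in-$P$ equi-integrability (or $L^\infty$) bound on $m$, delicate exactly because the drift $P$ is large; the flux equation $m'=\bar b-m(u'+P)$, a fast stable relaxation toward $\bar b/(u'+P)\approx1$ once $u'+P>0$, is the natural source of such a bound. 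A more computational alternative avoiding the limit is a matched expansion for $|P|\to\infty$: one checks $u'=(v-\int_{\T}v)/P+O(P^{-2})$ and $m=1+O(P^{-2})$, so that $\int_{\T}m^{q+1}=1+cP^{-4}+o(P^{-4})$ with $c>0$ for non-constant $v$, giving $\frac{d}{dP}\int_{\T}m^{q+1}\neq0$ for all large $|P|$ directly; there the difficulty is shifted to a rigorous justification of the expansion.
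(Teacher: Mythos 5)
Your reduction is the same as the paper's: by \eqref{nablaP} everything hinges on showing that, for fixed $\alpha>0$ and non-constant $v$, the map $P\mapsto\|m\|_{L^{q+1}(\T^1)}^{q+1}$ is not constant, and, like the paper, you aim to do this by proving $\|m\|_{L^{q+1}}^{q+1}\to1$ as $|P|\to\infty$ while Jensen's inequality forces $\|m\|_{L^{q+1}}^{q+1}>1$ for each $P$. The problem is the step you yourself flag as the main obstacle: passing from ``$m\rightharpoonup 1$'' (or from your identity $\int_{\T^1}m^{-1}\,dy=P/\bar b\to1$) to $\int_{\T^1}m^{q+1}\,dy\to1$. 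The quantity $\int(\sqrt m-1/\sqrt m)^2=\int m^{-1}-1$ gives no control on the part of $\T^1$ where $m$ is large, so it cannot rule out concentration at the critical exponent $q+1$; as written the contradiction argument does not close, and neither the ``fast relaxation'' heuristic from the flux ODE nor the matched asymptotic expansion is actually justified. So there is a genuine gap at precisely the decisive point.

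The paper closes this gap with one additional a priori estimate: arguing as in \cite[Proposition 3.4]{gomespatrizi} or \cite[Proposition 14]{cdm} (multiply the Kolmogorov equation by $\log m$ and use \eqref{e2dim1}) one gets $\int_{\T^1}|(\sqrt m)'|^2\,dy\le C\int_{\T^1}v\,dy$ \emph{uniformly in} $P$. In dimension one this bounds $\sqrt m$ in $W^{1,2}(\T^1)\hookrightarrow C^{0,1/2}(\T^1)$, hence $m$ in $L^\infty$ uniformly in $P$, and compactness upgrades the weak convergence $m\rightharpoonup1$ to strong convergence in $L^{q+1}(\T^1)$ --- exactly the missing equi-integrability. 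Your flux identity $\bar b=j$, $\int m^{-1}=P/\bar b$ is a nice elementary way to identify the weak limit, but it cannot substitute for this bound; if you insert the uniform $W^{1,2}$ estimate on $\sqrt m$, your argument becomes essentially the paper's proof. Note finally that (as you correctly observe, and as the paper's own proof makes explicit) the conclusion obtained this way is only that $\nabla_P\bar H(P,\alpha)\neq\bar b(P,\alpha)$ at \emph{some} $(P,\alpha)$ with $\alpha>0$, not at every point.
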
 
\begin{proof}  First of all we observe that by Jensen inequality $\| m\|_{L^{q+1}}^{q+1}>1$. 

Moreover, arguing as in \cite[Proposition 3.4]{gomespatrizi} or as in \cite[Proposition 14]{cdm} (see also \cite[Lemma 2.5]{cllp1}) and still using \eqref{e2dim1}, we have that $\sqrt{m}$ is bounded in $W^{1,2}(\T^n)$, uniformly in $P$. In other words, there exists a constant $C>0$ independent of $P$ such that
\[\int_{\T^n} |\nabla \sqrt{m}|^2 dy%\leq C\int_{\T^n} |\nabla u+P|^2 m dy
\leq C\int_{\T^n} v(y)dy.\]
%by \eqref{e2dim1}. 
So, since $n=1$, we conclude that $m\to \bar m$ strongly in $L^{q+1}(\T^1)$. On the other side, reasoning as in \cite[Proposition 14]{cdm} 
we can prove that $m$ converges weakly to $1$ in $L^{q+1}$. This implies that  $\| m\|_{L^{q+1}}^{q+1}\to 1$ as $|P|\to +\infty$ and then in particular 
$\nabla_P \bar H(P,\alpha)\neq \bar b (P,\alpha)$ at some $(P,\alpha)$. 
\end{proof} 
\section{Mean field game with logarithmic  nonlinearities}\label{sect:log}
In this section we consider potential of the form $V(y,m)=v(y)+\log m$. 
In this case, defining $\bar H(P)$ as \[\bar H(P, \alpha)=\bar H(P)+\log \alpha\qquad \forall \alpha>0\] the system \eqref{ucorintro} becomes
\begin{equation}\label{s3}
 \begin{cases}
(i) -\Delta u + \frac{|\nabla u+P|^2}{2} - v(y)-\log m=\bar H(P), & y\in\mathbb{T}^n\\
(ii) -\Delta m-{\rm div} ( m (\nabla u+P))=0, & y\in\mathbb{T}^n\\ (iii)  \int_{\mathbb{T}^n} u =0 \qquad  \int_{\mathbb{T}^n} m=1. & \end{cases}
\end{equation} 

We  collect some qualitative properties of the effective operators. 
\begin{proposition}\label{prop11} \ \  \ \begin{itemize}
\item[(i)] For every $P\in\R^n$,  there exists a unique constant $\bar H(P)$ such that \eqref{s3} admits a solution $(u,m)$. Moreover this solution is unique, 
$(u,m)\in C^{2,\gamma}(\T^n)\times W^{1,p}(\T^n)$ for every $\gamma\in (0,1)$ and $p>1$ and $m>0$.
\item[(ii)]$\bar H(P)$ is coercive in $P$, with quadratic growth, in particular  there exists $C>0$ such that
\[ \frac{|P|^2}{2}-C-2\int_{\T^n}  v(y)dy \leq \bar H(P)\leq \frac{|P|^2}{2}.\]
\item[(iii)] $\bar H(P)$ is strictly convex. 
\item[(iv)] There hold \begin{equation*}%\label{limiteh1}
  \lim_{|P|\to +\infty}\frac{\bar H(P)}{|P|^2} =\frac{1}{2}\qquad \text{and}\qquad 
\lim_{|P|\to +\infty}\frac{|\bar b(P)-P |}{|P|}=0.\end{equation*} 
 \end{itemize}
\end{proposition}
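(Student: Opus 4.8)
The plan is to obtain items (i), (ii) and (iv) by repeating, almost verbatim, the proof of Proposition~\ref{prop1}, the only systematic change being that the coupling term $\alpha^q m^q$ is everywhere replaced by $\log m$; the genuinely new point is the strict convexity in (iii), which is where the entropic structure of the logarithmic nonlinearity enters. For (i) I would simply invoke the existence, uniqueness and regularity theory for logarithmic ergodic MFG systems (as in \cite{gomespatrizi,cirant}, see also \cite{cc}), positivity $m>0$ following from the strong maximum principle applied to the Kolmogorov equation \eqref{s3}(ii).

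For (ii) I reproduce the two integral identities used in Proposition~\ref{prop1}. Multiplying \eqref{s3}(i) by $m$, \eqref{s3}(ii) by $u$, subtracting and using $\int_{\T^n}m=1$ gives the analogue of \eqref{c2dim1}, namely $\bar H(P)=\int_{\T^n}\tfrac12(|P|^2-|\nabla u|^2)m-m\log m-v\,m\,dy$, while integrating \eqref{s3}(i) gives the analogue of \eqref{c3dim1}, $\bar H(P)=\tfrac{|P|^2}{2}+\int_{\T^n}\tfrac12|\nabla u|^2-v-\log m\,dy$. Since $m$ is a probability density on $\T^n$, Jensen's inequality for the convex map $t\mapsto t\log t$ gives $\int_{\T^n}m\log m\ge0$, and with $v\ge0$ the first identity yields the upper bound $\bar H(P)\le|P|^2/2$. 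Subtracting the two identities produces the analogue of \eqref{e1dim1}, and the pointwise inequality $(m-1)\log m\ge0$ (which replaces \eqref{mon}) gives the a priori bound $\int_{\T^n}\tfrac12|\nabla u|^2(1+m)\,dy\le\int_{\T^n}v\,dy$, the analogue of \eqref{e2dim1}. Finally, concavity of $\log$ gives $\int_{\T^n}\log m\le\log\int_{\T^n}m=0$, so the second identity yields $\bar H(P)\ge|P|^2/2-\int_{\T^n}v$, which is even stronger than the stated lower bound. Item (iv) is then immediate as in Proposition~\ref{prop1}(iv): the first limit follows by dividing the bounds of (ii) by $|P|^2$, and for the second I write $\bar b(P)-P=\int_{\T^n}\nabla u\,m\,dy$ from \eqref{bbar} and estimate, by Cauchy--Schwarz and the bound just obtained, $|\bar b(P)-P|\le(\int_{\T^n}|\nabla u|^2m)^{1/2}\le(2\int_{\T^n}v)^{1/2}$, a constant independent of $P$.

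The core is (iii), and the idea is to realise $\bar H(P)$ as the value of a concave maximisation problem that depends on $P$ only affinely, so that convexity is automatic, and then to upgrade it to strict convexity using uniqueness. Passing to the flux variable $w=m(\nabla u+P)$, the Kolmogorov equation becomes the linear constraint $-\Delta m-{\rm div}\,w=0$; I consider the convex admissible set $\mathcal{K}=\{(m,w):-\Delta m-{\rm div}\,w=0,\ \int_{\T^n}m=1,\ m>0\}$ and the functional
\[
\Phi_P(m,w)=\int_{\T^n}\Big(P\cdot w-\tfrac{|w|^2}{2m}-v\,m-m\log m\Big)dy.
\]
The computation behind the first identity of (ii) shows that, evaluated at the solution of \eqref{s3} with $w=m(\nabla u+P)$, $\Phi_P$ equals exactly $\bar H(P)$. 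Moreover $\Phi_P$ is strictly concave on $\mathcal{K}$: the term $-|w|^2/(2m)$ is jointly concave in $(m,w)$, $-m\log m$ is strictly concave in $m$, and $P\cdot w-v\,m$ is affine. Since the Euler--Lagrange system of the maximisation of $\Phi_P$ over $\mathcal{K}$ is precisely \eqref{s3}, the solution of \eqref{s3} is the unique maximiser and $\bar H(P)=\max_{(m,w)\in\mathcal{K}}\Phi_P(m,w)$. As $P\mapsto\Phi_P(m,w)$ is affine for each fixed $(m,w)$, $\bar H$ is a supremum of affine functions, hence convex. For strictness I fix $P_0\neq P_1$, set $P_t=(1-t)P_0+tP_1$ with $t\in(0,1)$ and let $(m_t,w_t)$ be the maximiser at $P_t$: equality in the convexity inequality would force $(m_t,w_t)$ to maximise at both $P_0$ and $P_1$, hence by uniqueness to coincide with the solutions of \eqref{s3} at $P_0$ and $P_1$; the two velocity fields would then agree, $\nabla u_0+P_0=w_t/m_t=\nabla u_1+P_1$, so $\nabla(u_0-u_1)=P_1-P_0$, and integrating a periodic gradient over $\T^n$ forces $P_1=P_0$, a contradiction.

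The main obstacle is the rigorous justification of the variational identity $\bar H(P)=\max_{\mathcal{K}}\Phi_P$: I must check that $\Phi_P$ is well defined and finite on a suitable class of competitors (the quadratic-over-linear term $|w|^2/m$ and the entropy require $m>0$ and enough integrability), that the maximum is attained, and that its Euler--Lagrange system is exactly \eqref{s3} with the ergodic constant $\bar H(P)$ appearing, up to the additive constant fixed by the normalisation of (ii), as the Lagrange multiplier of the mass constraint. Once this duality and the strict concavity are in place, convexity is free and strict convexity follows cleanly from uniqueness of the solution together with the periodicity argument above.
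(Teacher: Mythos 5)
Your proposal is correct, and items (i), (ii), (iv) essentially track the paper's proof, but your treatment of (ii) and especially (iii) departs from it in a genuinely different and interesting way. For (ii), the paper does not get the upper bound from the first integral identity: it uses the Gomes--S\'anchez Morgado representation $\bar H(P)=\log\inf_\phi E_P(\phi)$ with $E_P(\phi)=\int_{\T^n}e^{-\Delta\phi+|\nabla\phi+P|^2/2-v}\,dy$ (formula \eqref{rep}), so that $\bar H(P)\le\log E_P(0)\le |P|^2/2$; and for the lower bound it derives $\int_{\T^n}\log m\le C+\int_{\T^n}v$ from the identity \eqref{e1dim2} via the pointwise inequality $m\log m\ge 2\log m-C$. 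Your direct Jensen argument $\int_{\T^n}\log m\,dy\le\log\int_{\T^n}m\,dy=0$ (valid since $|\T^n|=1$) is simpler and yields the sharper bound $\bar H(P)\ge |P|^2/2-\int_{\T^n}v$, which of course implies the stated one; similarly your upper bound via $\int_{\T^n}m\log m\ge 0$ is a legitimate alternative to invoking \eqref{rep}. For (iii) the paper simply cites \cite[Lemma 1]{gsm}, i.e.\ strict convexity of $P\mapsto\log\inf_\phi E_P(\phi)$, whereas you prove it through the Benamou--Brenier-type concave dual problem in the variables $(m,w)$ with $w=m(\nabla u+P)$: the computation showing $\Phi_P$ evaluated at the solution equals the first identity (hence $\bar H(P)$) is correct, joint concavity of $-|w|^2/(2m)$ (perspective function) plus strict concavity of $-m\log m$ gives uniqueness of the maximiser, the sup-of-affine-in-$P$ structure gives convexity, and your periodicity argument $\int_{\T^n}\nabla(u_0-u_1)=0\Rightarrow P_0=P_1$ correctly upgrades it to strict convexity. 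What each approach buys: the paper's route imports existence, uniqueness and convexity in one shot from \cite{gsm} but is tied to the specific exponential structure of the logarithmic coupling, while your duality argument is self-contained in spirit, extends to any convex local coupling $F(m)$, and moreover makes $\nabla_P\bar H(P)=\int_{\T^n}w\,dy=\bar b(P)$ transparent (an envelope-theorem reading of Theorem \ref{thmderivatives11}). The one step you should make explicit rather than defer is weak duality, i.e.\ $\Phi_P(m,w)\le\bar H(P)$ for \emph{every} competitor in $\mathcal K$: this is what the convexity inequality actually uses, and it follows from the standard fact that a critical point of a concave functional on a convex set is a global maximum, once $\mathcal K$ is restricted to a class (say $m\in C^2$, $m>0$, $w\in C^1$) containing all the solutions of \eqref{s3} and on which $\Phi_P$ is finite; with that observation your argument is complete.
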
 
\begin{proof}
 Following the arguments in \cite{gsm}, we introduce the following minimization problem. For any $\phi$ smooth and periodic we define the energy 
 \[ E_P(\phi)= \int_{\T^n} e^{-\Delta \phi+  \frac{|\nabla \phi+P|^2}{2} - v(y)} dy\] and consider the minimization problem
 \[\inf \{E_P(\phi)\ |\ \text{ $\phi$ smooth and periodic}\}.\] 
 Note that $\inf E_P(\phi)\leq  E_P(0)\leq e^{\frac{|P|^2}{2}}$. 
According to \cite[Theorem 1]{gsm}, there exists a unique (up to constants) smooth minimizer  $\phi$   for this problem. We define 
\[m_\phi(y):= \frac{e^{-\Delta \phi+ \frac{|\nabla \phi+P|^2}{2} - v(y)}}{E_P(\phi)}.\]
\\
(i) By computing the Euler Lagrange equation associated to the minimization problem we get that   $u=\phi$, $m=m_\phi$ are solution to \eqref{s3} with $\bar H$ given by  
\begin{equation}\label{rep} \bar H(P)=\log \inf\{ E_P(\phi)\ |\ \text{$\phi$ smooth and periodic}\}.\end{equation}  So, \cite[Theorem 1]{gsm} gives the existence  and uniqueness of a regular solution to \eqref{s3} (see also \cite[Thm 7.1]{gomespatrizi}). 
\\(ii) Using the representation formula \eqref{rep} for $\bar H(P)$, we get that  $\bar H(P)\leq \log  E_P(0)\leq \frac{|P|^2}{2}$. \\
 We multiply the first equation in \eqref{s3} by $m$, the second by $u$, integrate and subtract, and we obtain 
\begin{equation*}%\label{c2dim2}
\bar H(P)=\int_{\T^n} \frac{1}{2 }(|P|^2-|\nabla u|^2)m  -m\log m -v(y)mdy.  \end{equation*}
 Integrating the first equation in \eqref{s3}, %recalling \eqref{c2dim2}
we obtain 
\begin{multline}\label{c3dim2}\bar H(P)= \int_{\T^n}  \frac{|\nabla u+P|^2}{2}- \log m  - v(y)dy=
 \frac{|P|^2}{2}+\int_{\T^n}  \frac{|\nabla u|^2}{2}-  \log m - v(y) dy.
\end{multline}
The last two equalities give
 \begin{equation}\label{e1dim2}\int_{\T^n} \frac{1}{2 } |\nabla u|^2 (m+1)+(m-1) v(y)+ (m\log m-\log m)dy=0.  \end{equation}
Note that $m\log m\geq \log m$, so we conclude that also in this case  \eqref{e2dim1} holds.  Moreover, we observe that there exists $C>0$ such that $m\log m\geq 2\log m-C$ and then
\[\int_{\T^n} (m-1) v(y)+ \log m dy-C\leq \int_{\T^n} (m-1) v(y)+ (m\log m-\log m)dy\leq 0.\]
This in turns gives 
\[\int_{\T^n} \log m dy \leq C+\int_{\T^n}  v(y)dy.\]
So, again by \eqref{c3dim2}, we conclude $\bar H(P)\geq \frac{|P|^2}{2}-C-2\int_{\T^n}  v(y)dy.$
\\ (iii) The strict convexity is  a consequence of the representation formula \eqref{rep}, see \cite[Lemma1]{gsm}. 
 \\ (iv) The proof is the same as in Proposition \ref{prop1}. 
\end{proof}
As before, using the properties stated in Proposition \ref{prop11}, we obtain regularity properties of the effective operators, and a relation between them. 
\begin{theorem}\label{thmderivatives11} 
The maps  $P\to \bar H(P), \bar b(P )$ are locally Lipschitz continuous and admit partial derivatives everywhere. 
Moreover, there hold
\begin{equation}\label{nablaP1}\nabla_P \bar H(P)=  \bar b(P). \end{equation}
\end{theorem}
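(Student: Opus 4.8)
The plan is to follow verbatim the architecture of the proof of Theorem \ref{thmderivatives1}, recording only the one place where the logarithmic coupling forces a decisive simplification. First I would establish the local Lipschitz continuity of $\bar H$ and $\bar b$ and the existence of partial derivatives, repeating the arguments of \cite[Proposition 5, Theorem 11]{cdm} but now feeding in the a priori bounds furnished by Proposition \ref{prop11} (the control on $\int_{\T^n}|\nabla u|^2(m+1)$ coming from \eqref{e2dim1}, which still holds here, and the bound on $\int_{\T^n}\log m$). These estimates play exactly the role that \eqref{e2dim1} and \eqref{lq+1} played in the power case, so no genuinely new idea is needed at this stage.

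Next I would linearize the system \eqref{s3} at the solution $(u,m)$ associated to $P$. Since $\partial_m(-\log m)=-1/m$, the analogue of \eqref{s2der} reads
\begin{equation}\label{s3der}
\begin{cases}
(i) & -\Delta\tilde u_i + \nabla\tilde u_i\cdot(\nabla u+P) + (\nabla u+P)\cdot e_i - \frac{1}{m}\,\tilde m_i = c_i(P)\\
(ii) & -\Delta\tilde m_i - {\rm div}\big((P+\nabla u)\tilde m_i\big) = {\rm div}\big(m(\nabla\tilde u_i+e_i)\big)\\
(iii) & \int_{\T^n}\tilde m_i = \int_{\T^n}\tilde u_i = 0.
\end{cases}
\end{equation}
Arguing as in \cite[Lemma 6]{cdm}, there is a unique constant $c_i(P)$ for which \eqref{s3der} admits a (regular, unique) solution, and, exactly as in Theorem \ref{thmderivatives1}, the difference-quotient argument of \cite[Theorem 7]{cdm} identifies $c_i(P)=\partial\bar H/\partial P_i(P)$.

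The heart of the matter is the compatibility condition. Multiplying $(i)$ of \eqref{s3der} by $m$ and integrating over $\T^n$, and using that $m$ solves $(ii)$ of \eqref{s3} — so that the linearized transport–diffusion operator applied to $\tilde u_i$ integrates to zero against $m$ — I obtain
\begin{equation*}
c_i(P) = \int_{\T^n}(\nabla u+P)\cdot e_i\, m\,dy - \int_{\T^n}\frac{1}{m}\,\tilde m_i\, m\,dy = \bar b_i(P) - \int_{\T^n}\tilde m_i\,dy.
\end{equation*}
Here the logarithmic structure is decisive: the coupling coefficient $m\,V_m = m\cdot\frac1m \equiv 1$ is constant in $y$, so the correction term collapses to $\int_{\T^n}\tilde m_i\,dy$, which vanishes by the normalization $(iii)$ of \eqref{s3der}. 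This is precisely the general identity \eqref{H_P} with $V_m(y,\alpha m)=1/(\alpha m)$, for which $\alpha\int_{\T^n}V_m(y,\alpha m)\tilde m_i m\,dy = \int_{\T^n}\tilde m_i\,dy = 0$. Thus $c_i(P)=\bar b_i(P)$, which is \eqref{nablaP1}.

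I expect the only real work to be the preliminary regularity/differentiability package and the well-posedness of \eqref{s3der} (in particular the uniform estimates needed to pass to the limit in the difference quotients); both are routine adaptations of \cite{cdm} once Proposition \ref{prop11} is in hand. By contrast, the algebraic cancellation that yields the exact identity $\nabla_P\bar H=\bar b$ — and hence the preservation of the MFG structure, in sharp contrast with the power case of Corollary \ref{coro1} — is immediate, and is entirely due to the fact that $m\,\partial_m(\log m)\equiv 1$ is independent of $y$.
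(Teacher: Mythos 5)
Your proposal is correct and follows essentially the same route as the paper: the same linearized system, the same identification of $c_i(P)$ with $\partial\bar H/\partial P_i$ via difference quotients as in the power case, and the same observation that the compatibility term $\int_{\T^n}\frac{1}{m}\tilde m_i\,m\,dy=\int_{\T^n}\tilde m_i\,dy$ vanishes by the normalization. The only detail the paper makes explicit that you leave implicit is the bound $1/m\in L^{\infty}(\T^n)$ (from \cite[Theorem 5.1]{gomespatrizi}), needed for the well-posedness of the linearized system.
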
 
\begin{proof} The regularity comes from the properties of $\bar H$ proved in Proposition \ref{prop11}.
\\ We fix $P\in\R^n$; let $(u,m)$ be the solution to \eqref{s3} associated to $P$.  
We introduce the following system for all $i=1,\dots, n$ 
\begin{equation}\label{s2der1}
\begin{cases} (i) &  -\Delta\tilde u_i + \nabla\tilde u_i\cdot (\nabla u+P) + (\nabla u+P)\cdot e_i -\frac{\tilde m_i}{m}=c_i(P) \\ (ii)&
-\Delta \tilde m_i-{\rm div}\big((P+\nabla u)\tilde m_i \big) ={\rm div }(m(\nabla \tilde u_i+ e_i)) \\
(iii)& \int_{\mathbb{T}^n}\tilde m_i=\int_{\mathbb{T}^n}\tilde u_i=0. \end{cases}
\end{equation} Note that by \cite[Theorem 5.1]{gomespatrizi} $\frac{1}{m}\in L^\infty(\T^n)$.
Arguing as in \cite[Lemma 6]{cdm}, we get that for every $i$ there exists a unique $c_i(P)$ for which this system admits a solution $(\tilde u_i,\tilde m_i)$,
which is regular and unique. Moreover, the compatibility condition for existence of solutions reads
\[c_i(P)= \int_{\T^n} (\nabla u+P)_i m- \tilde m_i dy =\int_{\T^n} (\nabla u+P)_i m dy= \bar b_i(P,\alpha).\]
Moreover as in Theorem \ref{thmderivatives11}, we get that, up to subsequences,  $c_i(P)=\lim_{\delta\to 0} \frac{\bar H(P+\delta e_i)-\bar H(P)}{\delta}$. So relation \eqref{nablaP1} is exactly the compatibility condition for existence of solutions to \eqref{s2der1}. 
\end{proof} 

\section{Numerical Tests}\label{sect:numer}
This section is devoted to the numerical solution of the cell problems presented in the previous sections, 
in particular we provide a numerical validation of the properties involving the effective Hamiltonian $\bar H$ and the effective drift $\bar b$. 
We recall that the computation of effective operators is an expensive task, since it requires the solution of a nonlinear problem for each pair $(P,\alpha)$.
To this end we apply the {\em direct method} we recently introduced in  \cite{cc16,ccm17}, 
an efficient solver for ergodic problems associated to the homogenization of Hamilton-Jacobi equations. 
The new method avoids completely classical approximations such as the \emph{small-delta} or the \emph{large-time} methods (see \cite{acd, q}). 
The ergodic constant $\bar H$ appearing in the cell problem \eqref{ucorintro} is handled as an additional unknown, so that the corresponding nonlinear system is 
formally \emph{overdetermined}, having more equations than unknowns. 
The solution is then meant in a \emph{least-squares} sense and computed by a generalized Newton method for inconsistent systems. 
We refer the reader to \cite{cc16,ccm17} for technical details and for the actual implementation of the method. Here we just remark that, 
after introducing a discretization of the torus $\T^n$ with $N$ nodes, collecting the approximations $U_i, M_i, \bar H$ ($i=1,...,N$) of the unknowns 
in a single vector $X=(U,M,\bar H)$ of length $2N+1$, and choosing a scheme 
which is consistent with the notion of viscosity solutions, we end up with a nonlinear system of $2N+2$ equations. 
The first $2N$ equations correspond to the partial differential equations in \eqref{ucorintro}, whereas the last $2$ equations correspond to the normalization conditions 
$\int_{\T^n} u=0$ and $\int_{\T^n}m=1$. Defining a suitable  non linear map $F:\R^{2N+1}\to\R^{2N+2}$, the resolution of the cell problem is then reduced to 
$$\mbox{Find $X\in\R^{2N+1}$ such that $F(X)=0\in\R^{2N+2}$}\,.$$
\noindent 
Note that, once a solution $X$ is computed, we obtain the effective drift $\bar b$ in \eqref{bbar} by a simple trapezoidal quadrature rule. 
Moreover,  the system \eqref{mvarP} can be discretized  in a similar  way and its solution easily computed in a single iteration, being an overdetermined but linear system. 
Hence we get the solution  $(\tilde U,\tilde M,\frac{\partial \bar H}{\partial P})$, from which we can readily compute the integral in \eqref{H_P}, 
again by a trapezoidal quadrature rule. This gives, in particular, a practical strategy to validate numerically the relationship described by  \eqref{H_P}.

Here we consider for simplicity the one dimensional case $n=1$, but the method can be easily implemented in any dimension. 
In all the tests we choose 
%the coefficient of the first order term $a(y)\equiv 1$ and 
a nonnegative potential of the form $v(x)=A(1+\frac{1}{2}(\sin(2\pi x)+\cos(4\pi x)))$ with $A>0$. 
We observe that, for small values of the amplitude $A$, the corresponding {\em correctors} $u$ and $m$, i.e. solutions to \eqref{ucorintro}, are quite close to $0$ and $1$ 
respectively, as shown in Figure \ref{PotentialA} in the case $V(y,m)=v(y)+m$ with $P=10$ and $\alpha=2$. 
This considerably affects the shape of the effective Hamiltonian and effective drift, 
making difficult to explore numerically the features we are interested in. That's why we enforce non trivial solutions by choosing $A=100$ in what follows.  

\begin{figure}[h!]
 \begin{center}
 \begin{tabular}{ccc}
\includegraphics[width=.3\textwidth]{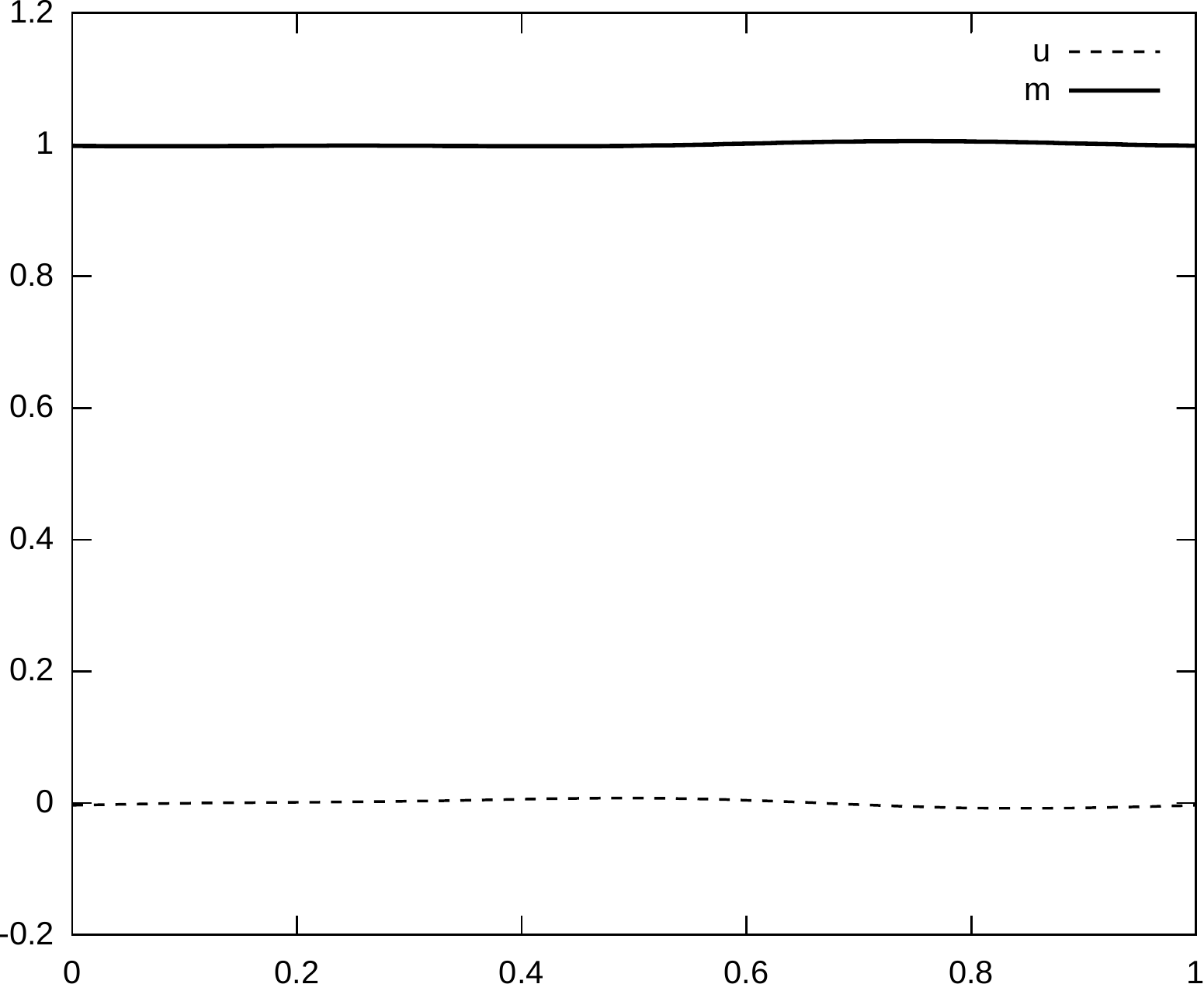} &
\includegraphics[width=.3\textwidth]{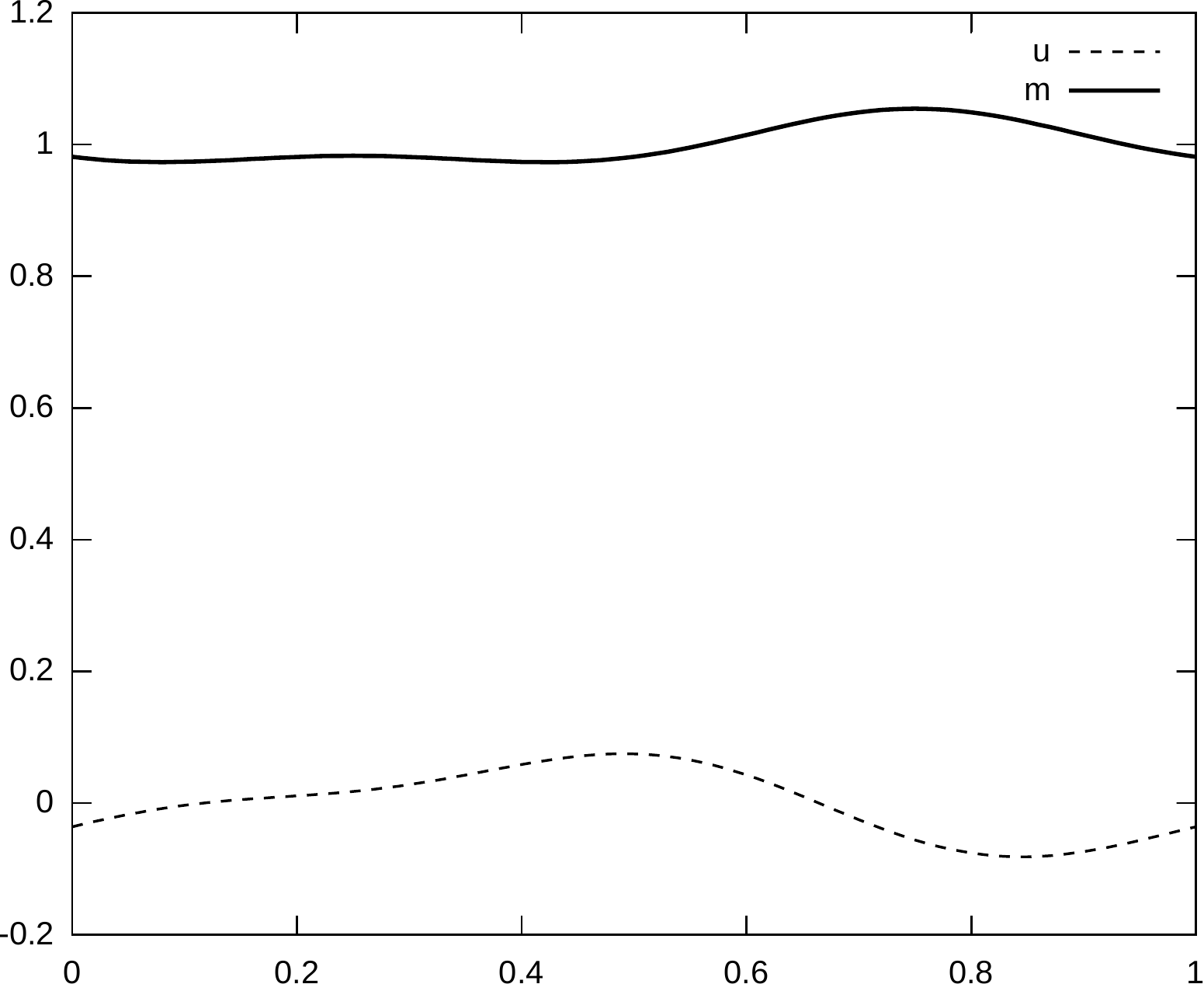} &
\includegraphics[width=.3\textwidth]{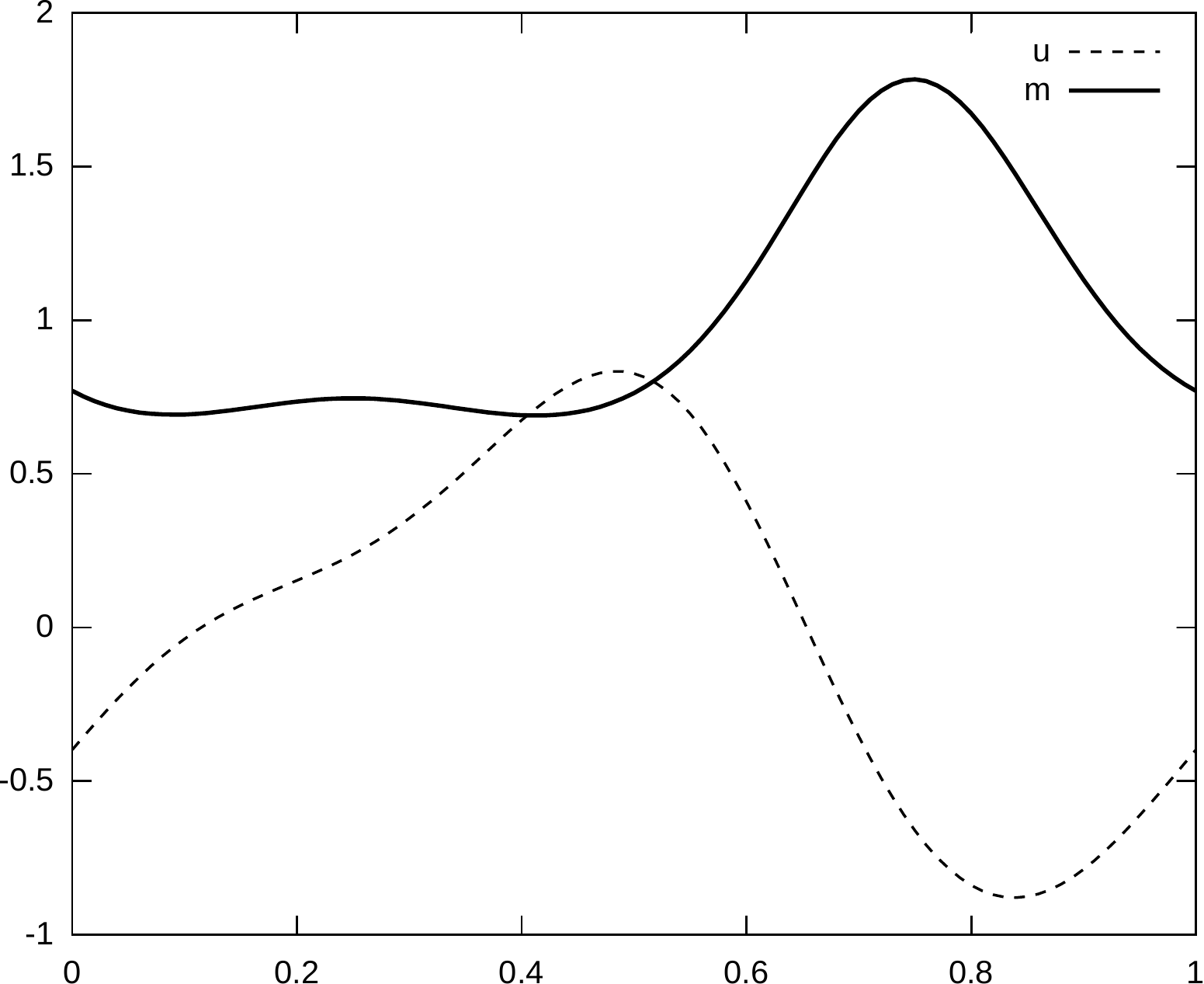} \\
(a)&(b)&(c)
\end{tabular}
\end{center}
\caption{Correctors of the cell problem \eqref{ucorintro} depending on the amplitude $A$ of the potential $v$: $A=1$ (a), $A=10$ (b), $A=100$ (c).}\label{PotentialA}
\end{figure}

We start with the case of power nonlinearities, setting $V(y,m)=v(y)+m^q$, with $q=1,2$. We discretize the torus $\T^1$ with $N=400$ 
nodes and we consider a uniform grid of $51\times 51$ nodes for discretizing the space of parameters $[-10,10]\times[0,20]\ni (P,\alpha)$. \\
Figure \ref{HBQ1} 
shows the surfaces and the level sets of the computed effective Hamiltonian $\bar H(P,\alpha)$ and the effective drift $\bar b(P,\alpha)$ as functions of $(P,\alpha)$ for $q=1$, 
whereas Figure \ref{HBQ2} shows the case $q=2$. 
\begin{figure}[h!]
 \begin{center}
 \begin{tabular}{cc}
\includegraphics[width=.45\textwidth]{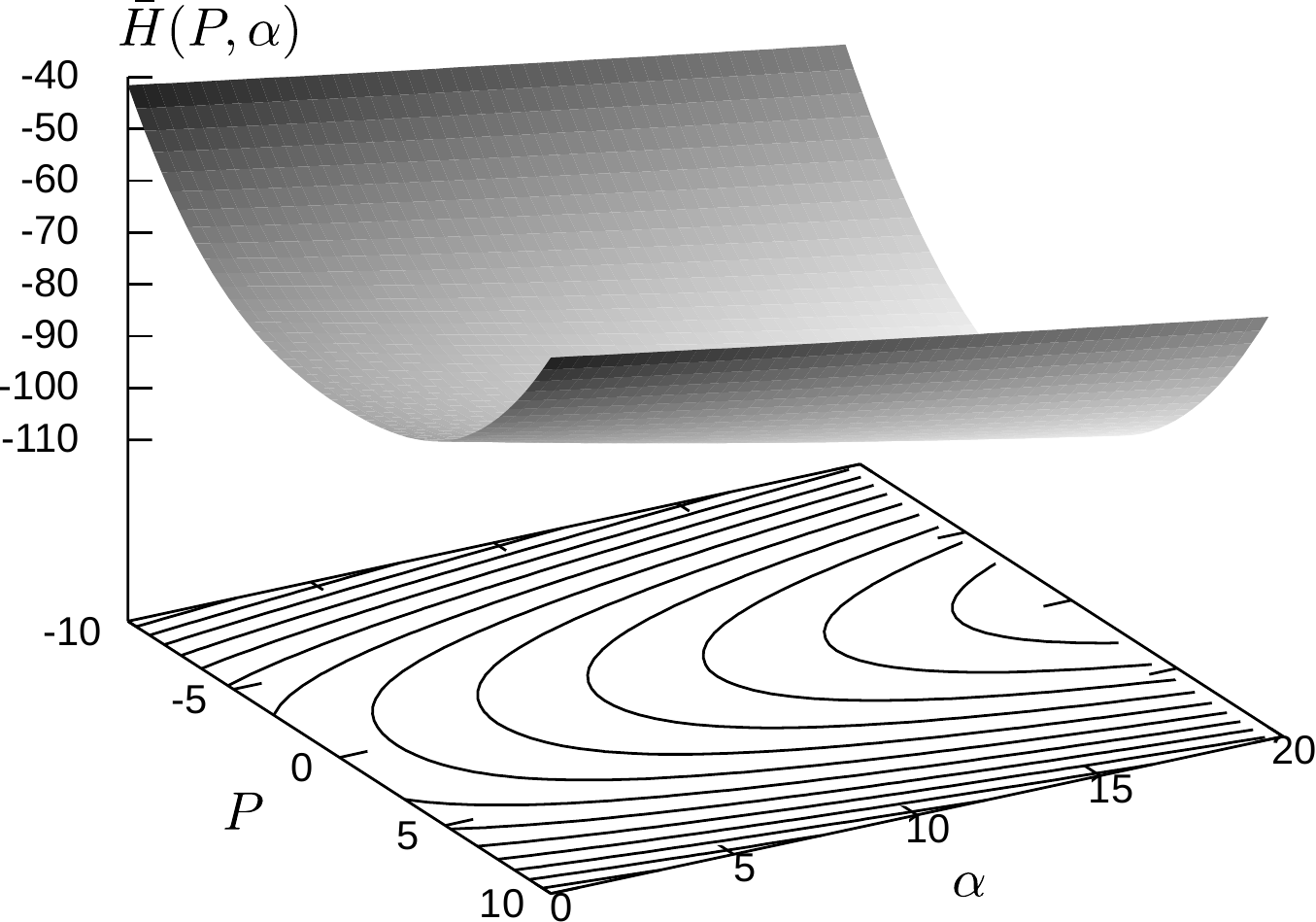} &
\includegraphics[width=.45\textwidth]{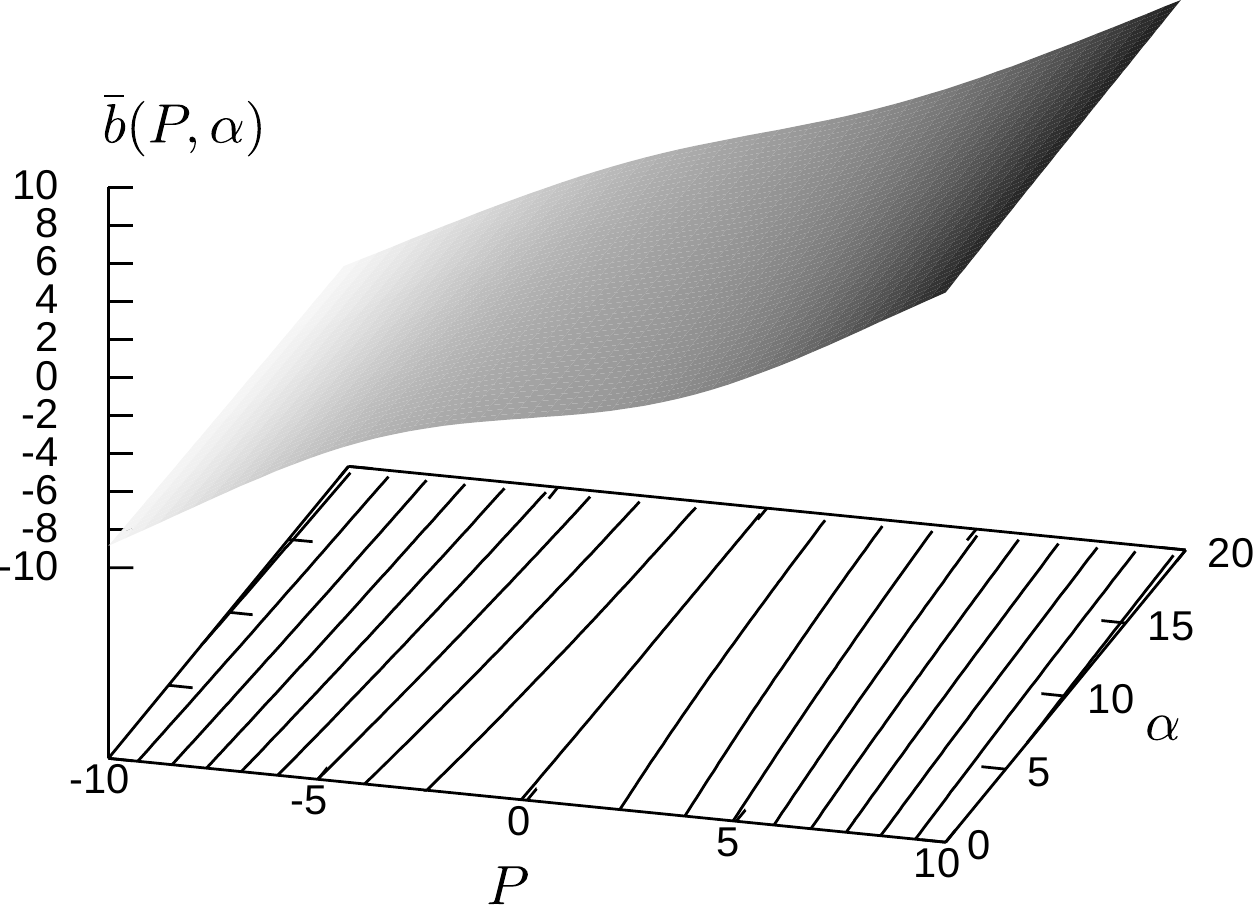} \\
(a)&(b)
\end{tabular}
\end{center}
\caption{Surfaces and level sets of $\bar H$ (a) and $\bar b$ (b) for $q=1$.}\label{HBQ1}
\end{figure}
\begin{figure}[h!]
 \begin{center}
 \begin{tabular}{cc}
\includegraphics[width=.45\textwidth]{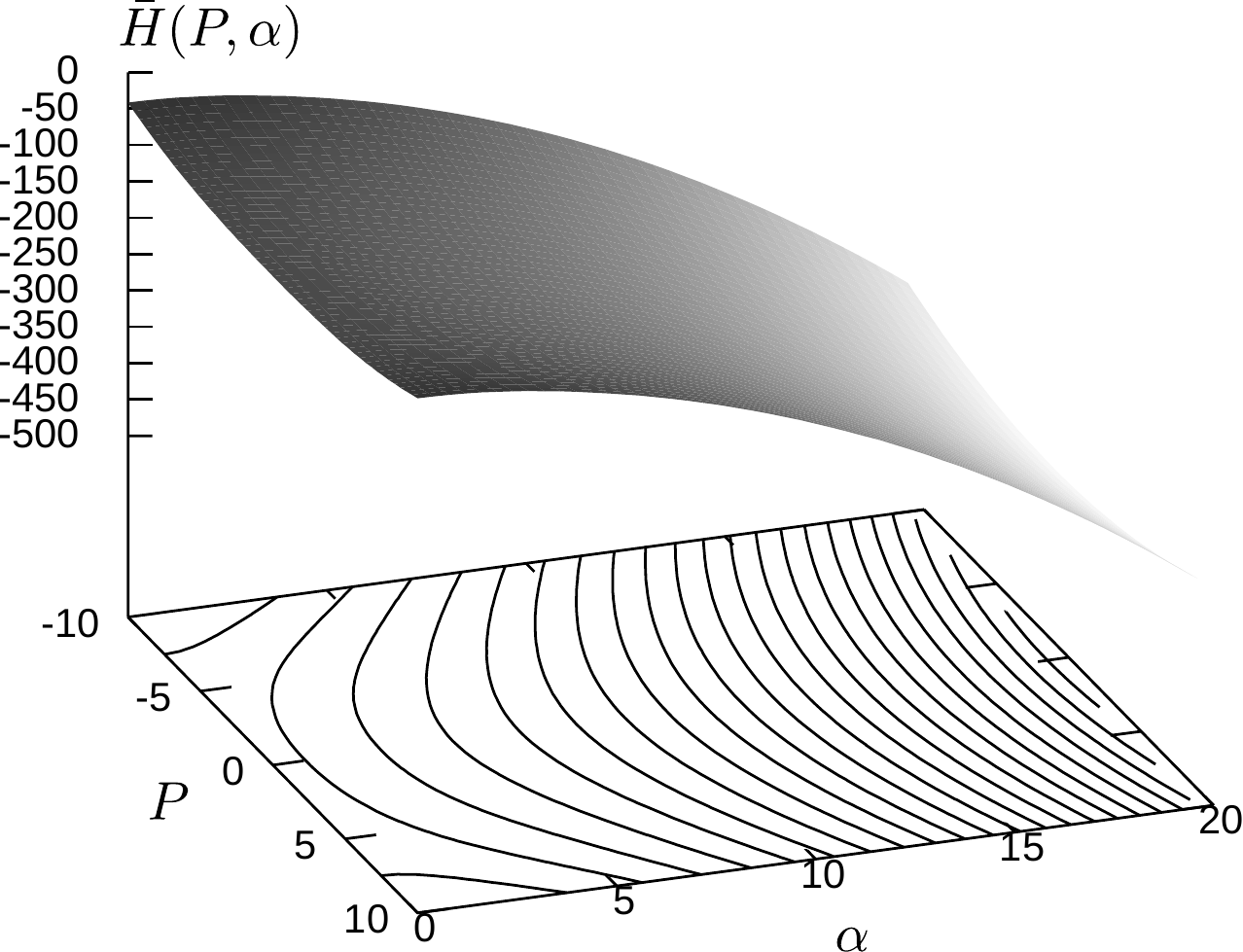} &
\includegraphics[width=.45\textwidth]{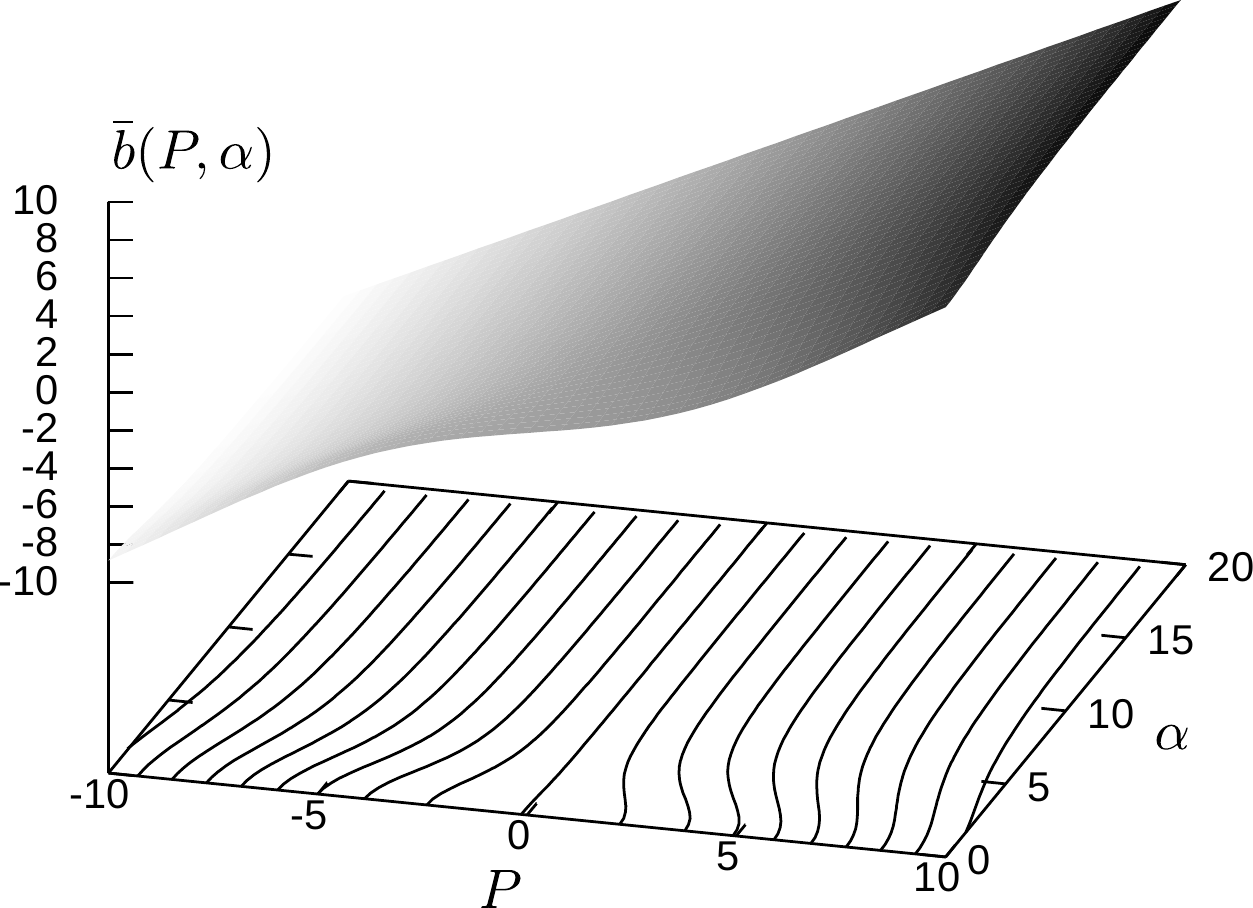} \\
(a)&(b)
\end{tabular}
\end{center}
\caption{Surfaces and level sets of $\bar H$ (a) and $\bar b$ (b) for $q=2$.}\label{HBQ2}
\end{figure}
\noindent In both cases $q=1,2$ we observe that $\bar H(P,\alpha)$ is ``almost'' quadratic in $P$ and nonincreasing in $\alpha$, while $\bar b(P,\alpha)$ exhibits a cubic-like 
behavior for $P$ close to zero. Let us take a closer look to some slices of these effective surfaces, in order to analyze their behavior for $P$ large. 
We choose $\alpha=10$ and we plot in Figure \ref{HBinf} the graphs of $\frac{\bar H(P,\alpha)}{P^2}$ and $\frac{|\bar b(P,\alpha)-P|}{P}$ as functions of $P$ for $q=1,2$.
We clearly observe convergence to $\frac{1}{2}$ and $0$ respectively, as expected by Proposition \ref{prop1}-(iv). \\

\begin{figure}[h!]
 \begin{center}
 \begin{tabular}{cc}
\includegraphics[width=.45\textwidth]{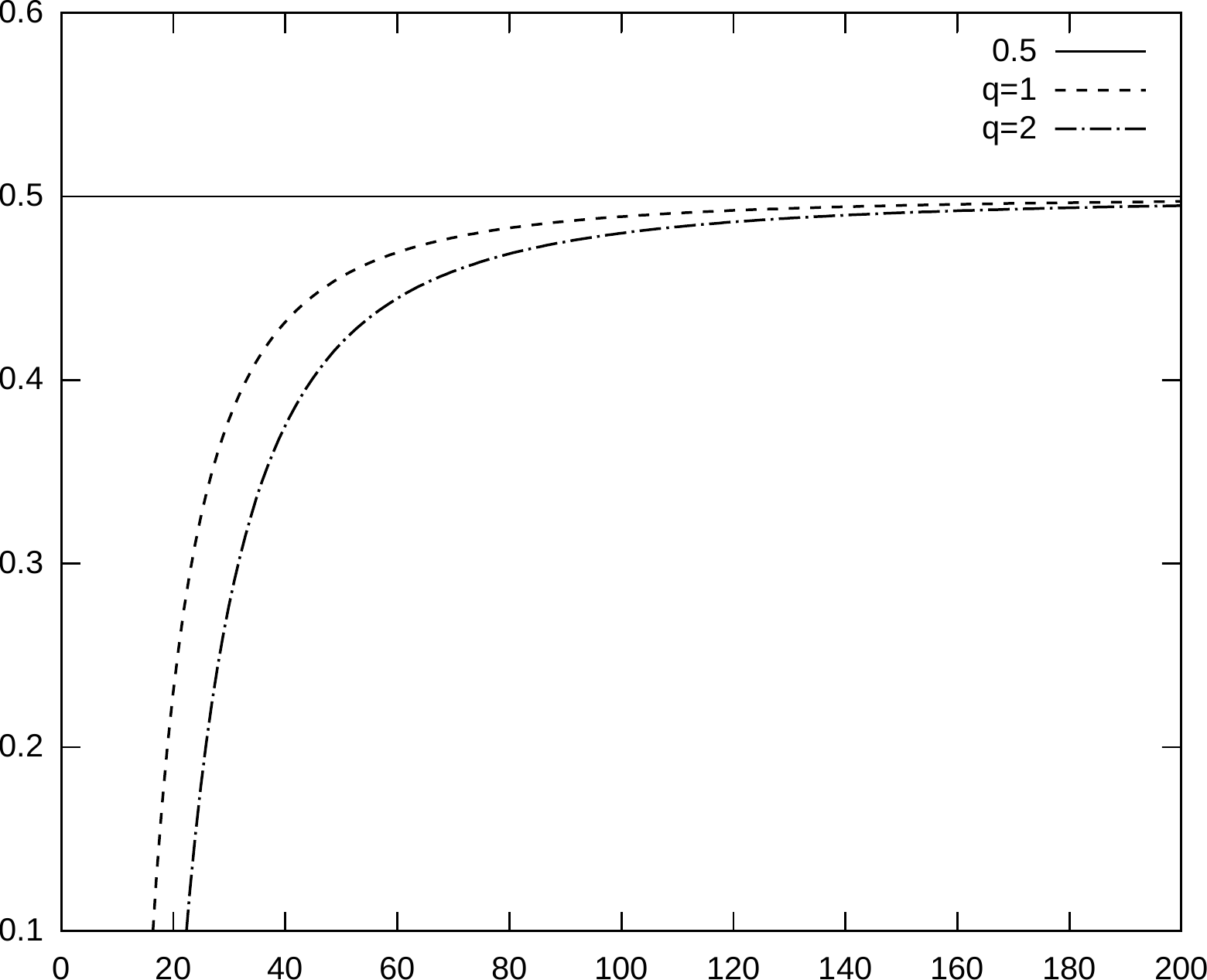} &
\includegraphics[width=.45\textwidth]{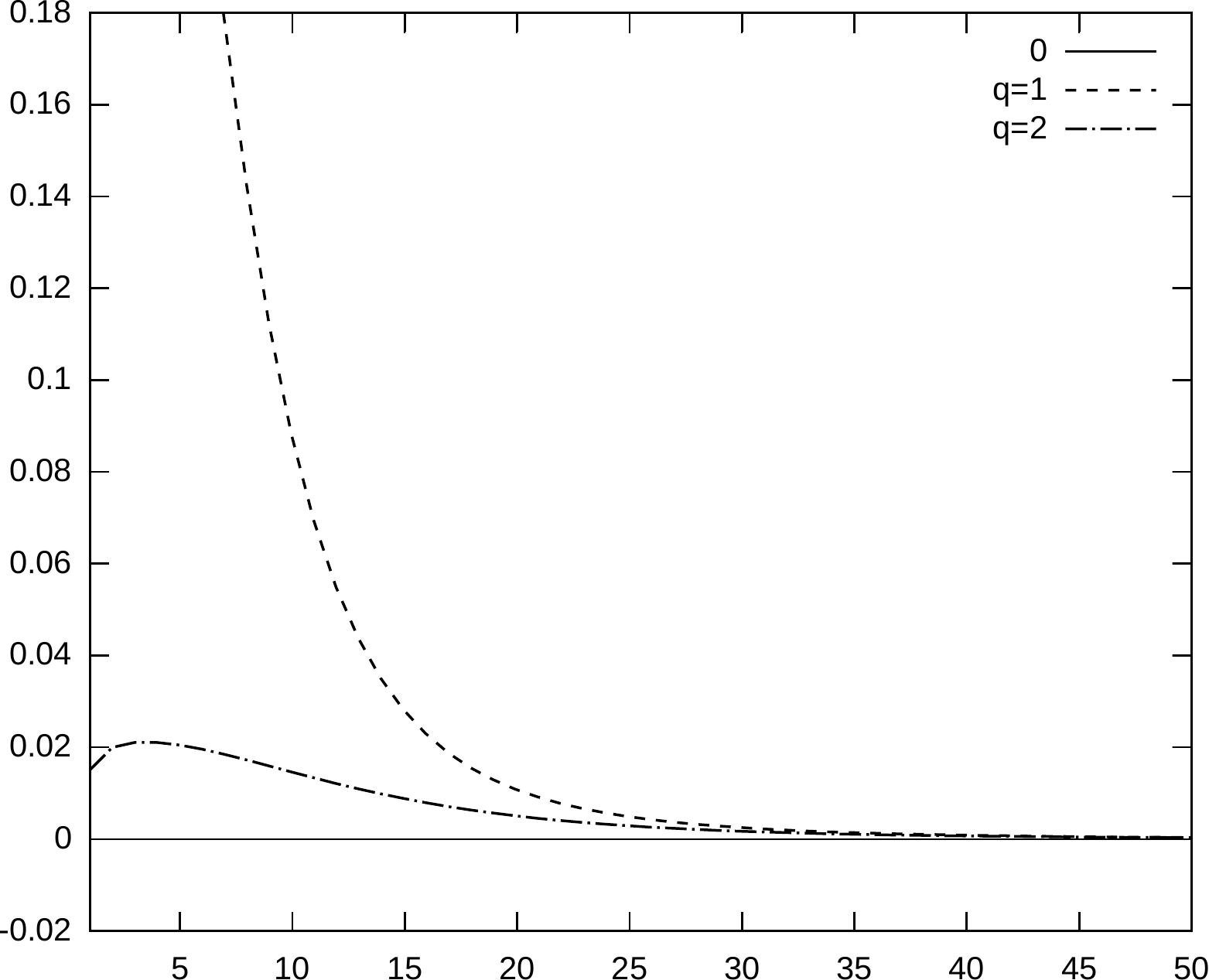} \\
(a)&(b)
\end{tabular}
\end{center}
\caption{Asymptotic behavior in $P$ of $\bar H(P,\alpha)/P^2$ (a) and $|\bar b(P,\alpha)-P|/P$ (b).}\label{HBinf}
\end{figure}

We proceed by examining the relationship between the effective operators described in the previous section.
First of all we want to show that, in the case of power nonlinearities, the structure of mean field game is lost during the homogenization process, namely we have 
$\nabla_P\bar H(P,\alpha)\neq \bar b(P,\alpha)$. To this end, we consider the residual in \eqref{H_P} as a measure of this discrepancy:
$$
\mathcal{R}(P,\alpha)=\left|\alpha \int_{\T^n}V_m(y,\alpha m) \tilde m m dy\right|\,,
$$
where we recall that, for each $(P,\alpha)$, the auxiliary $\tilde m$ solves the linearized cell problem \eqref{mvarP}, in which the previously computed 
correctors $(u,m)$ appear as given data. 

\noindent Figure \ref{QRp} shows the residuals respectively for $q=1$ and $q=2$ on the space of parameters 
$[-10,10]\times[0,20]$, while Figure \ref{QRp-inf} shows the behavior of the corresponding $L^\infty$ norms under grid refinement. 
We clearly observe that, as $N$ increases, the residual converges but not vanishes. 
Note that, in the case of power nonlinearities, the residual can be computed using also expression \eqref{nablaP}, in which appears 
the derivative in $P$ of the $q+1$ norm of the corrector $m$. Indeed, denoting by $m_{(P,\alpha)}$ the solution of \eqref{s2}, 
we can approximate $\frac{\partial}{\partial P}\| m_{(P,\alpha)}\|_{L^{q+1}}^{q+1}$ by finite differences of the form  
$\frac{\|m_{(P+\delta,\alpha)}\|_{L^{q+1}}^{q+1}- \|m_{(P,\alpha)}\|_{L^{q+1}}^{q+1}}{\delta}$ for some small $\delta>0$. This approach leads to similar results, 
up to an additional error of order $\mathcal{O}(\delta)$. \\

\begin{figure}[h!]
 \begin{center}
 \begin{tabular}{cc}
\includegraphics[width=.45\textwidth]{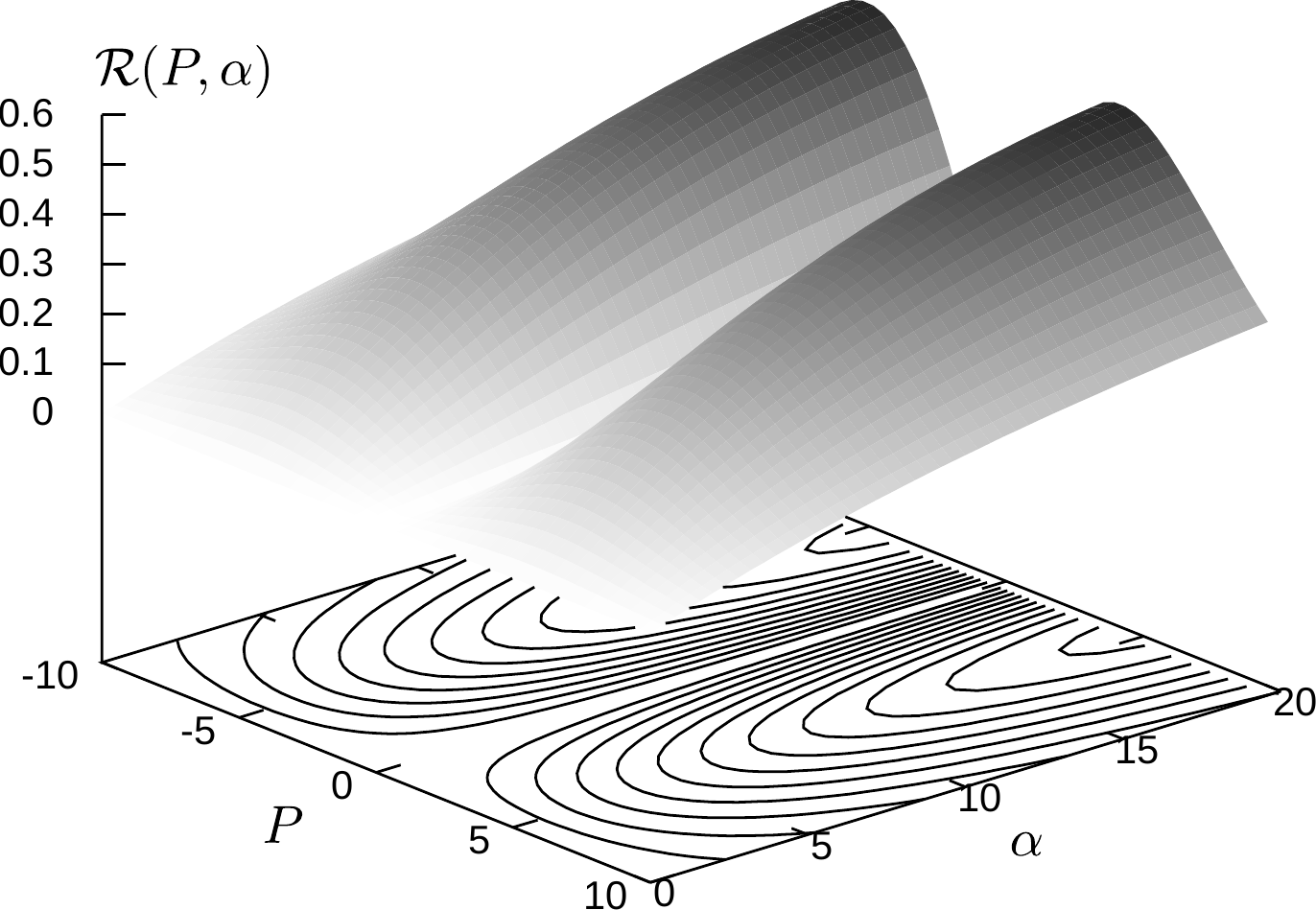} &
\includegraphics[width=.45\textwidth]{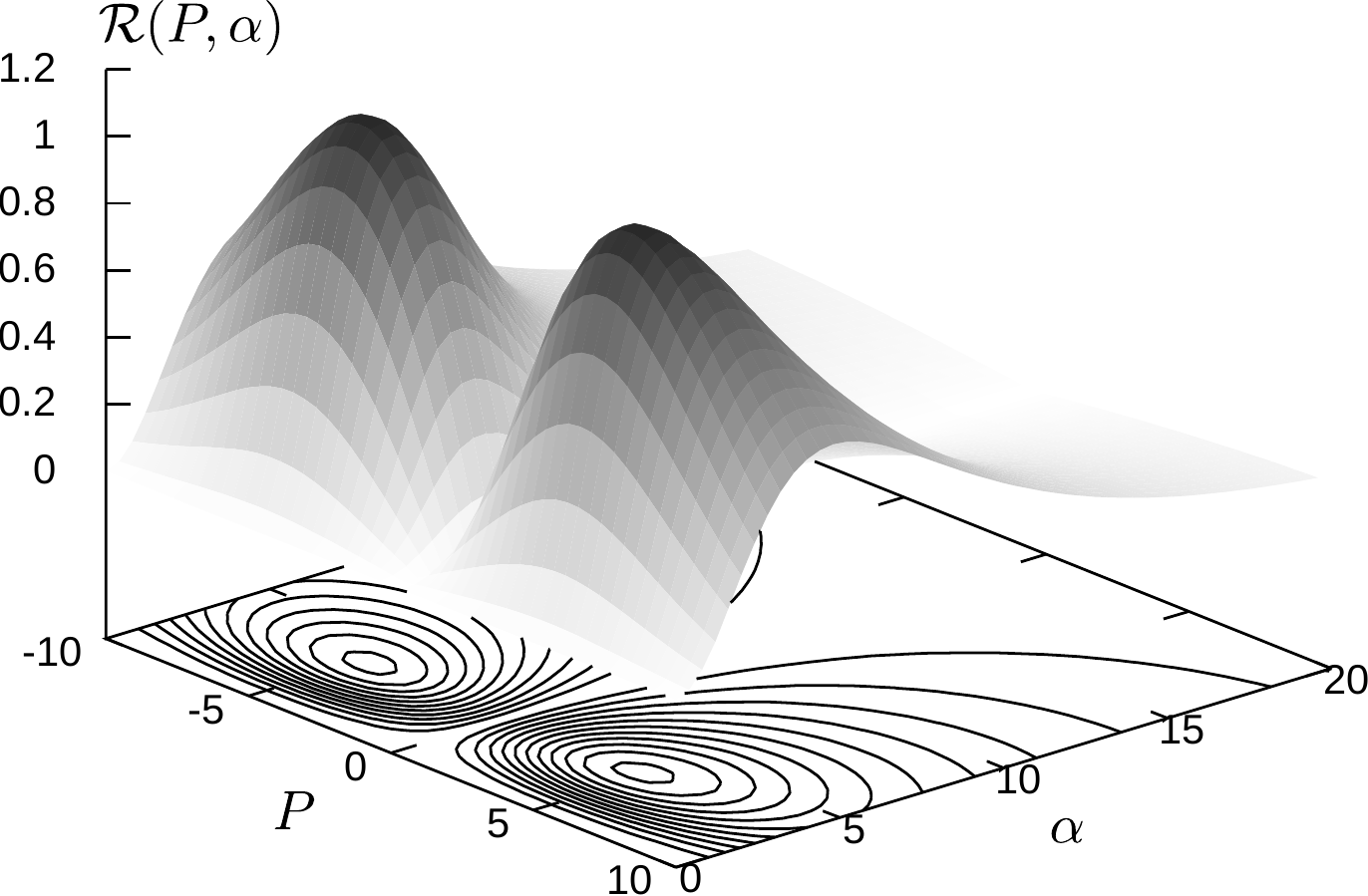} \\
(a)&(b)
\end{tabular}
\end{center}
\caption{Surface and level sets of the residual $\mathcal R$ for $q=1$ (a) and $q=2$ (b).}\label{QRp}
\end{figure}

\begin{figure}[h!]
 \begin{center}
 \begin{tabular}{cc}
\includegraphics[width=.45\textwidth]{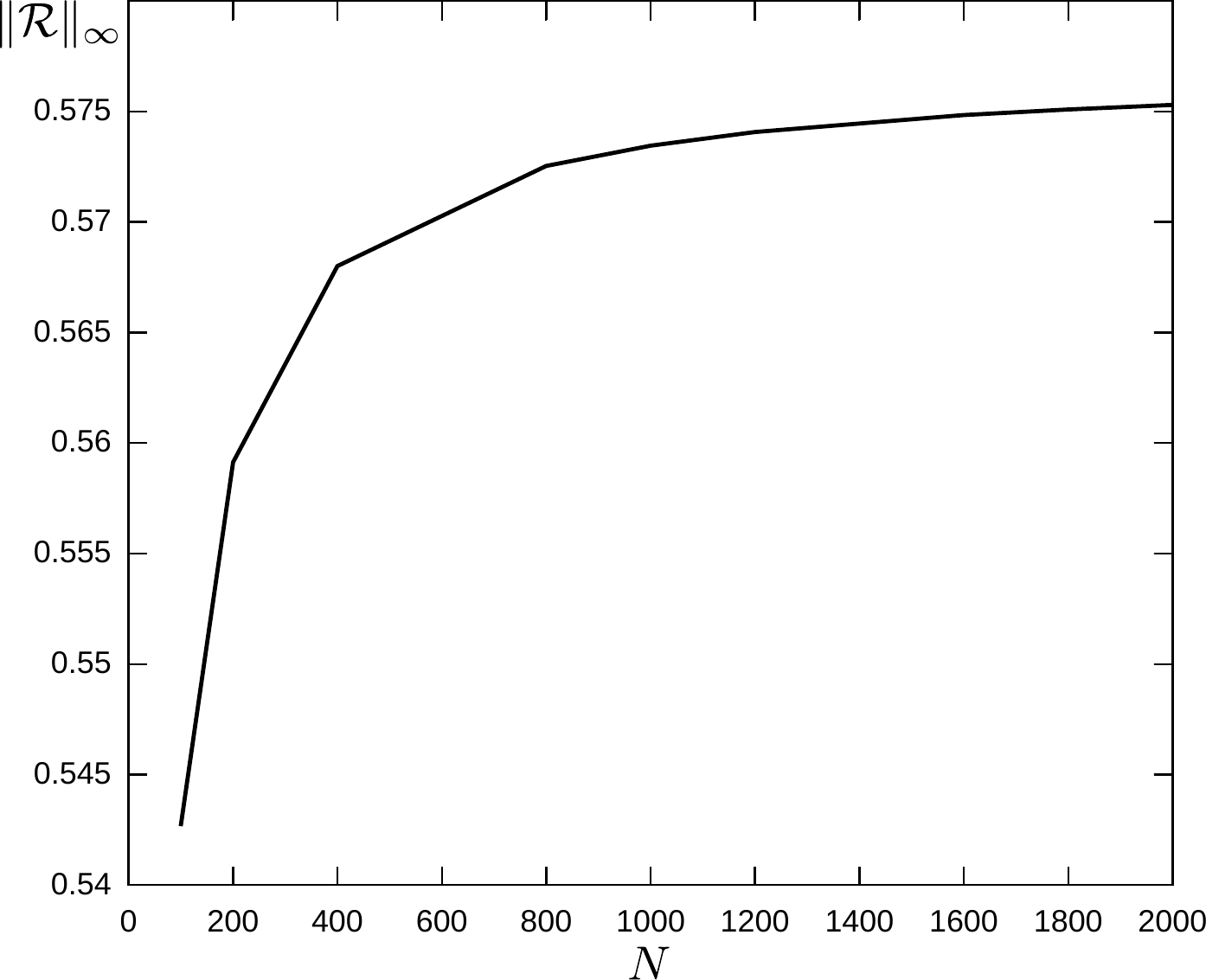} &
\includegraphics[width=.45\textwidth]{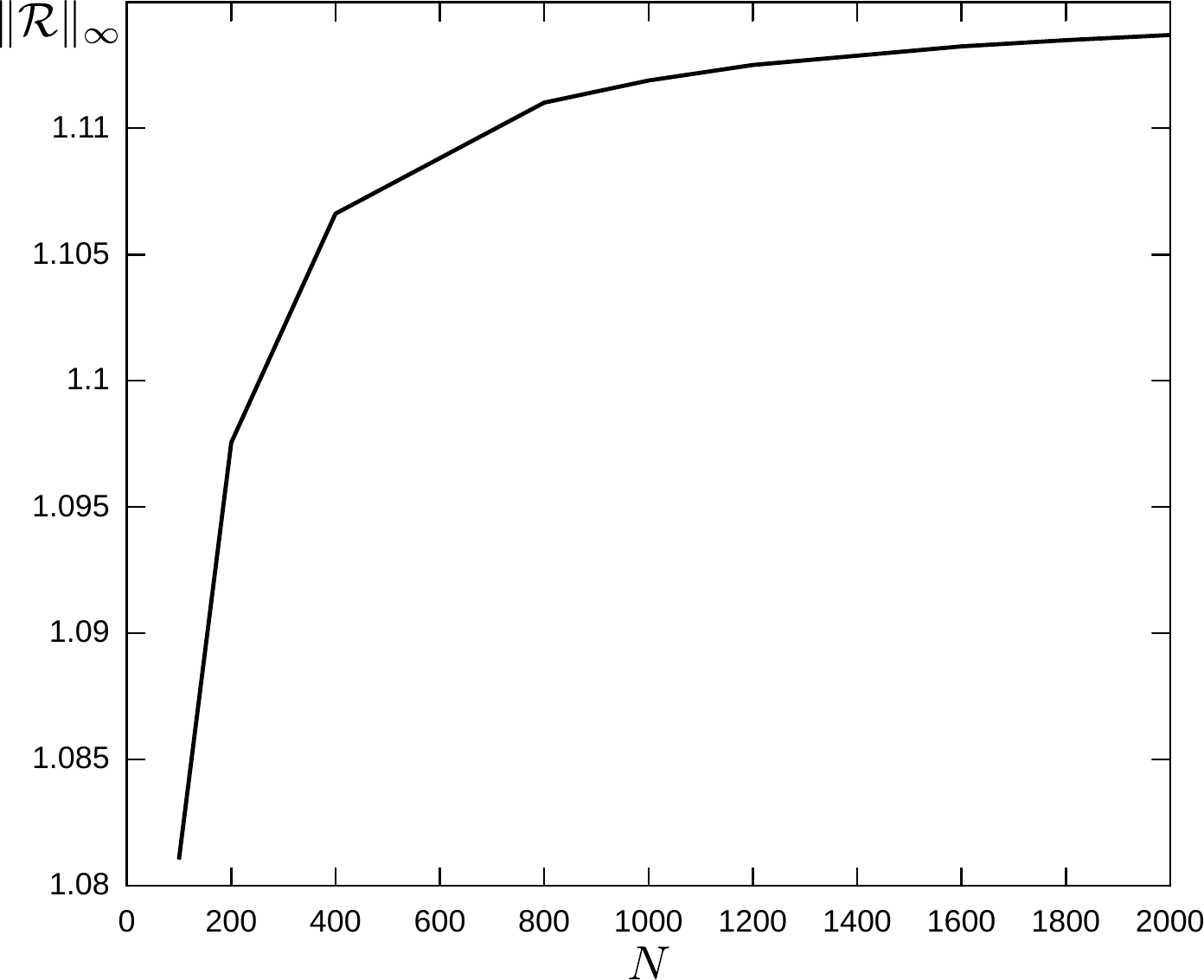} \\
(a)&(b)
\end{tabular}
\end{center}
\caption{$L^\infty$ norm of $\mathcal R$ under grid refinement for $q=1$ (a) and $q=2$ (b).}\label{QRp-inf}
\end{figure}

\noindent We evaluate the numerical error in the relationship \eqref{H_P} by defining 

$$\mathcal{E}_\infty=
\left\|\frac{\partial H}{\partial P}(P,\alpha) - \bar b(P,\alpha)+\alpha \int_{\T^n}V_m(y,\alpha m) \tilde m m dy\right\|_\infty\,,$$
Figure \ref{EpaNQ} shows that, for both $q=1$ and $q=2$, the error $\mathcal{E}_\infty$ has order of convergence $1$, under grid refinement for a space step $h=1/N$. 
\begin{figure}[h!]
 \begin{center}
 \begin{tabular}{cc}
\includegraphics[width=.45\textwidth]{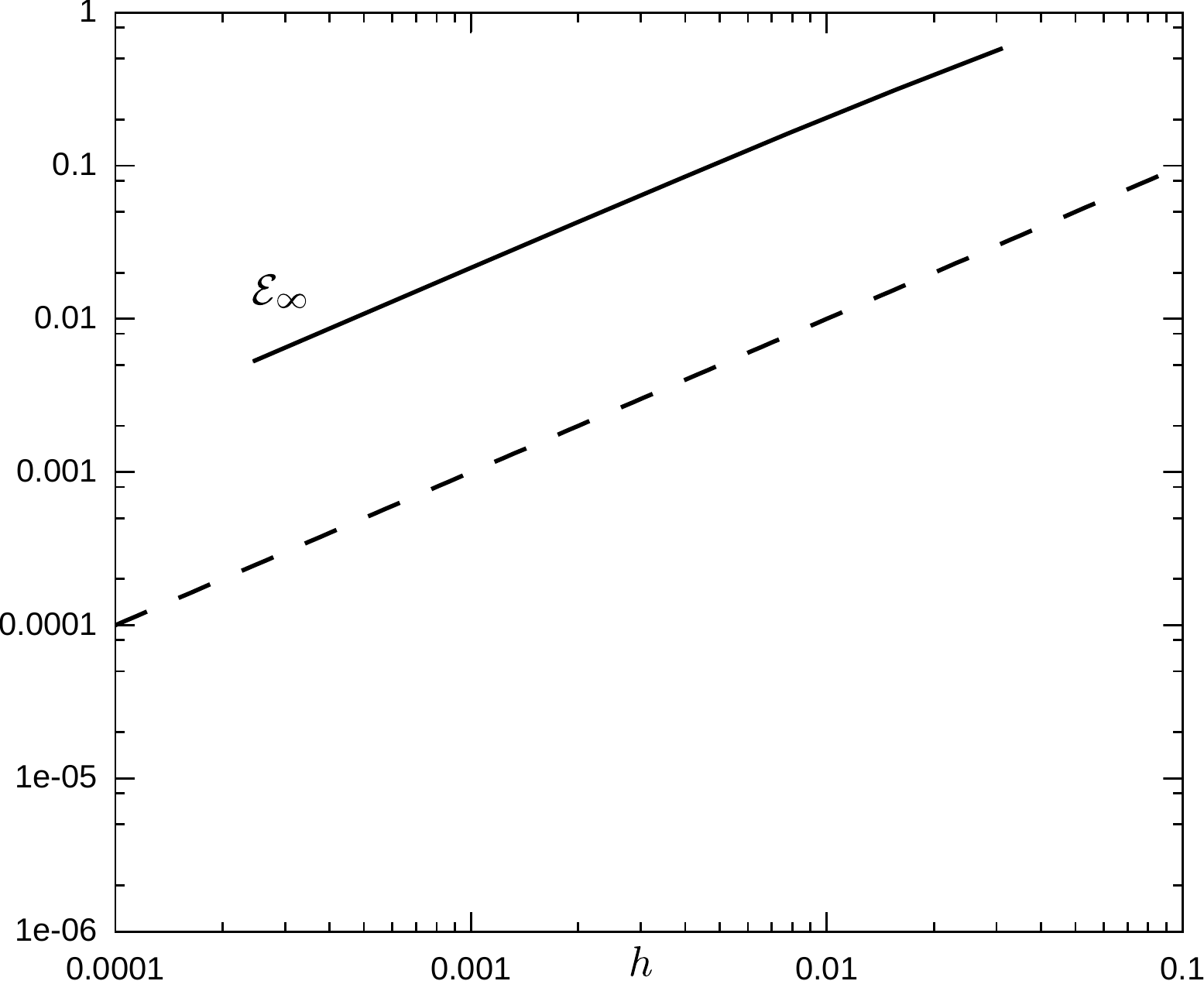} &
\includegraphics[width=.45\textwidth]{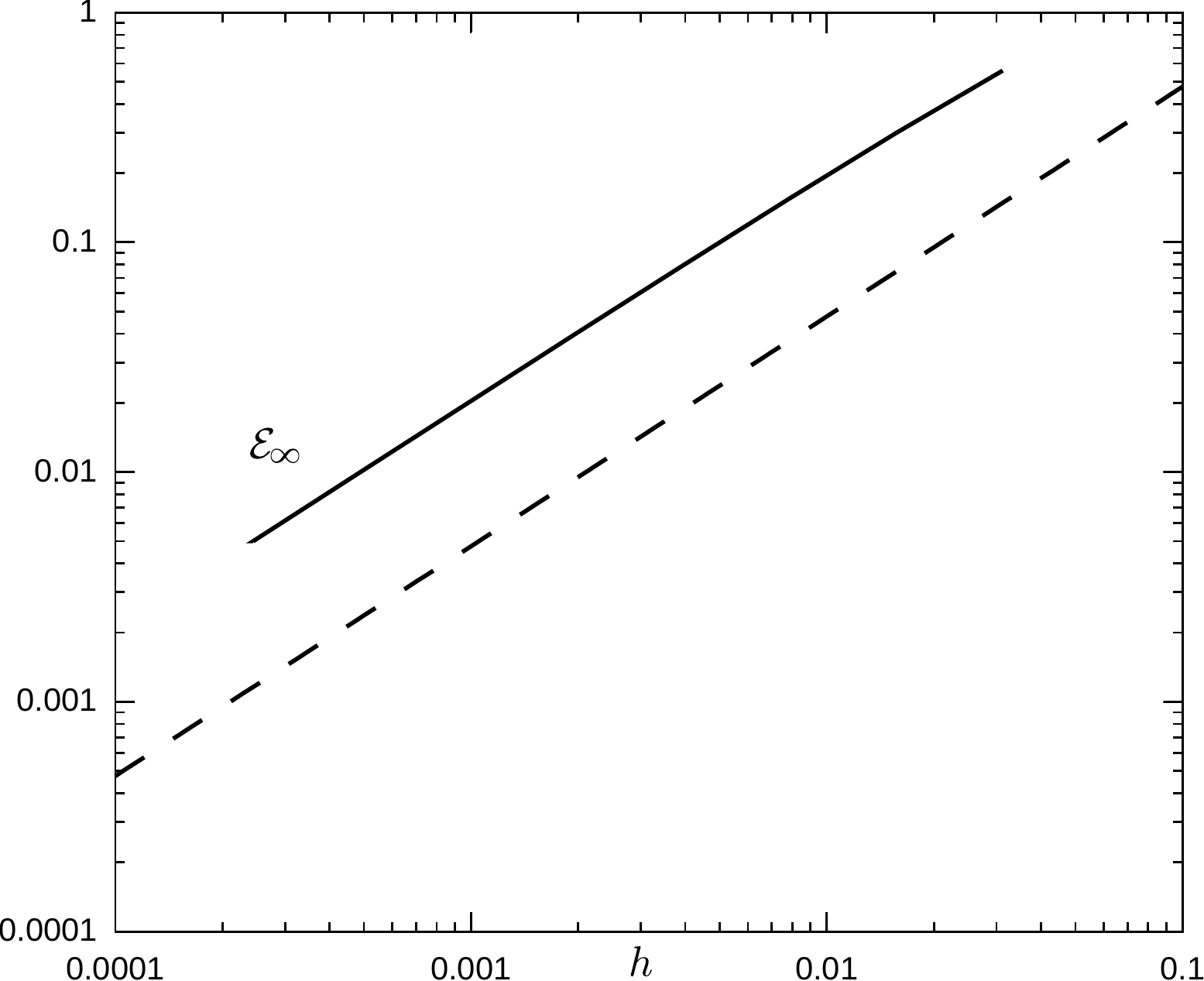} \\
(a) & (b)
\end{tabular}
\end{center}
\caption{$\mathcal{E}_\infty$ under grid refinement for $q=1$ (a) and $q=2$ (b) compared with straight (dashed) lines of slope $1$.}\label{EpaNQ}
\end{figure}\\

\noindent We now consider the logarithmic case, setting $V(y,m)=v(y)+\log m$. We already know that in this case the dependency of $\bar H$ on $P$ and $\alpha$ is separated, 
however we solve the full cell problem \eqref{ucorintro} and not the reduced one in \eqref{s3}, in order to recover this feature numerically.    
Again, we discretize the torus $\T^1$ with $N=400$ nodes and we consider a uniform grid of 
$51\times 51$ nodes for discretizing the space of parameters $[-10,10]\times[1,20]\ni (P,\alpha)$ (here we avoid the singularity of the $\log$ function just starting with $\alpha>0$). \\
Figure \ref{HBLOG} shows the surfaces and the level sets of the computed effective Hamiltonian $\bar H(P,\alpha)$ and the effective drift $\bar b(P,\alpha)$ as functions of $(P,\alpha)$. 
We observe in particular the convexity of $\bar H$ in $P$ and the independence of $\bar b$ on $\alpha$. 
We finally recall that in this case the residual $\mathcal{R}(P,\alpha)=\left|\alpha \int_{\T^n}V_m(y,\alpha m) \tilde m m dy\right|=0$, being $V_m=1/m$ and $\tilde m$ with zero mean. 
Then, in Figure \ref{LOGR-inf} we show the error 
$\mathcal E(P,\alpha)=|\nabla_P\bar H(P,\alpha)-\bar b(P,\alpha)|$ and the behavior of its $L^\infty$ norm $\mathcal{E}_\infty$ under grid refinement for a space step $h=1/N$. 
We see that $\mathcal {E}_\infty$ does not depend 
on $\alpha$ and it exihibits again an order of convergence $1$ as the space step goes to zero. This confirm the relation $\nabla_P\bar H(P,\alpha)=\bar b(P,\alpha)$ in the logarithmic case and also the separated dependency 
of $\bar H$ on $P$ and $\alpha$.\\
\begin{figure}[h!]
 \begin{center}
 \begin{tabular}{cc}
\includegraphics[width=.45\textwidth]{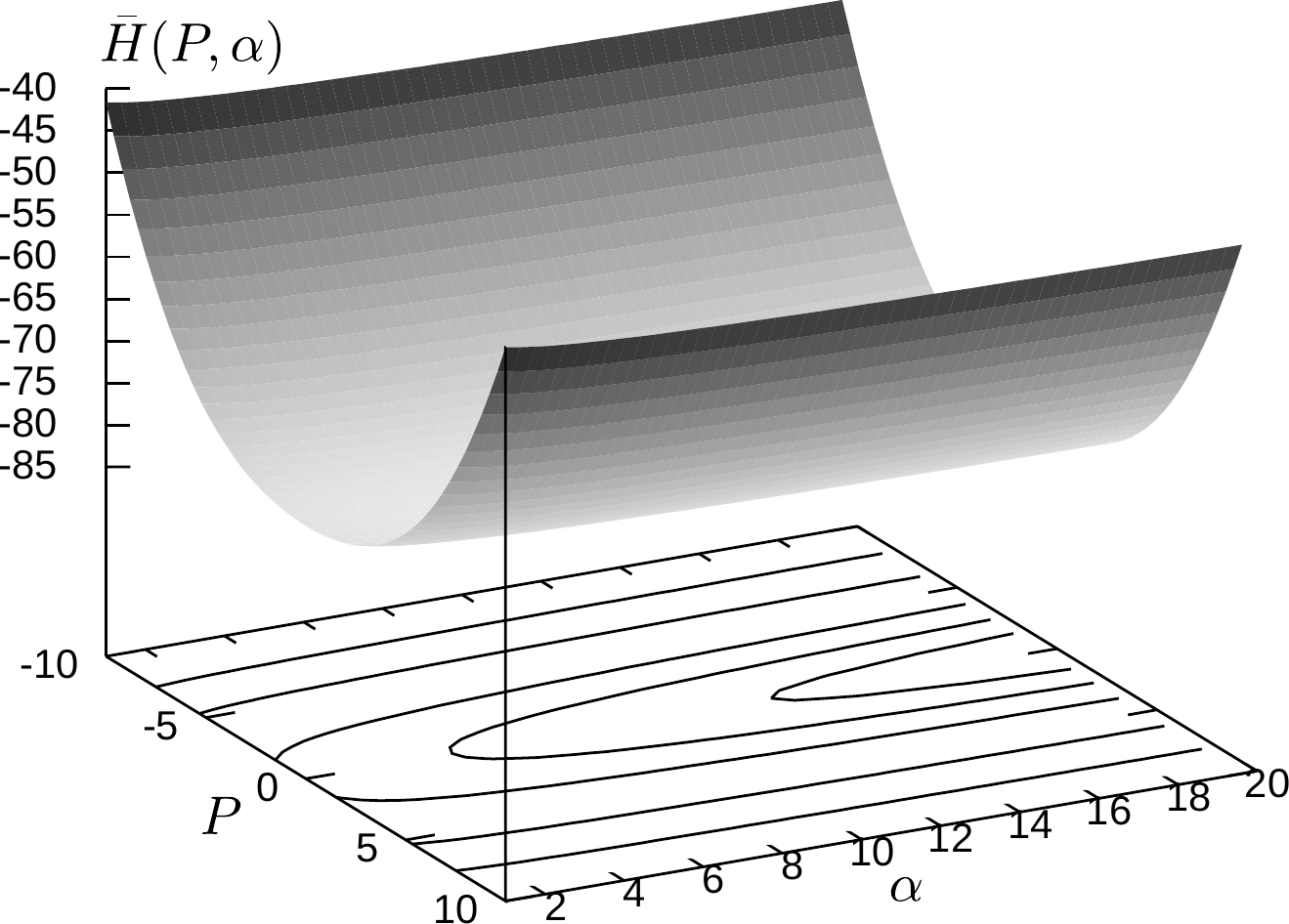} &
\includegraphics[width=.45\textwidth]{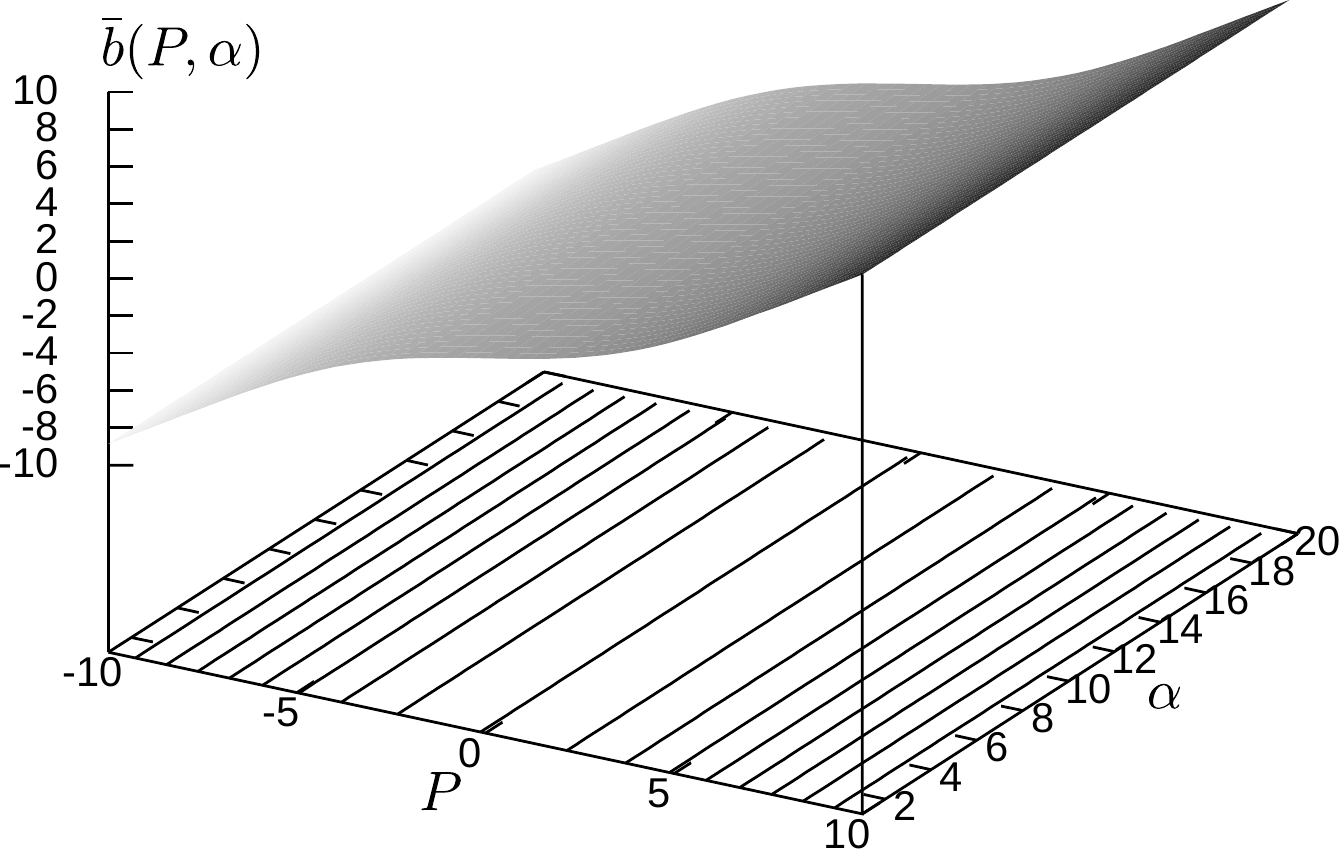} \\
(a)&(b)
\end{tabular}
\end{center}
\caption{Surfaces and level sets of $\bar H$ (a) and $\bar b$ (b).}\label{HBLOG}
\end{figure}
\begin{figure}[h!]
 \begin{center}
 \begin{tabular}{cc}
\includegraphics[width=.5\textwidth]{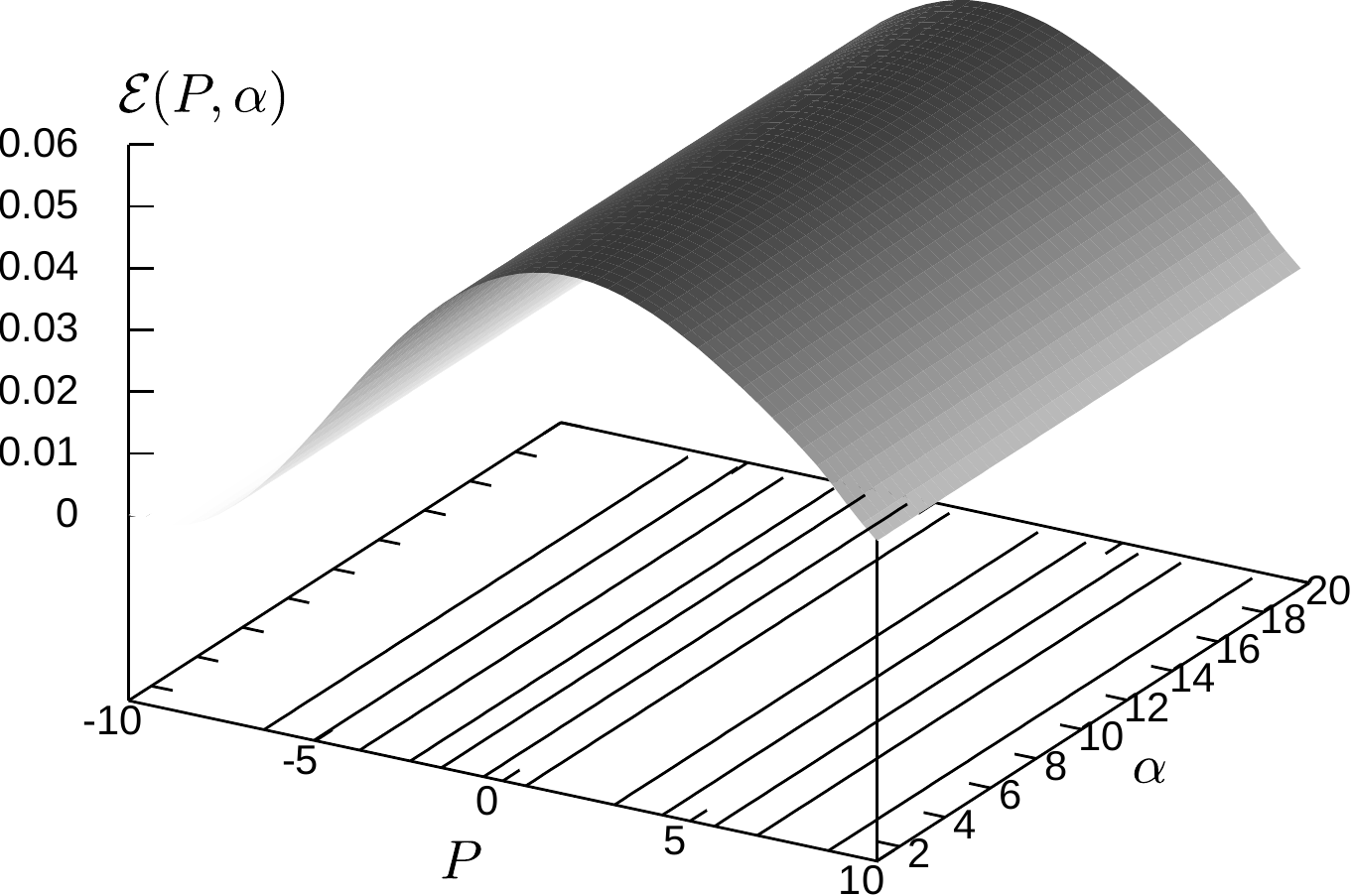} &
\includegraphics[width=.45\textwidth]{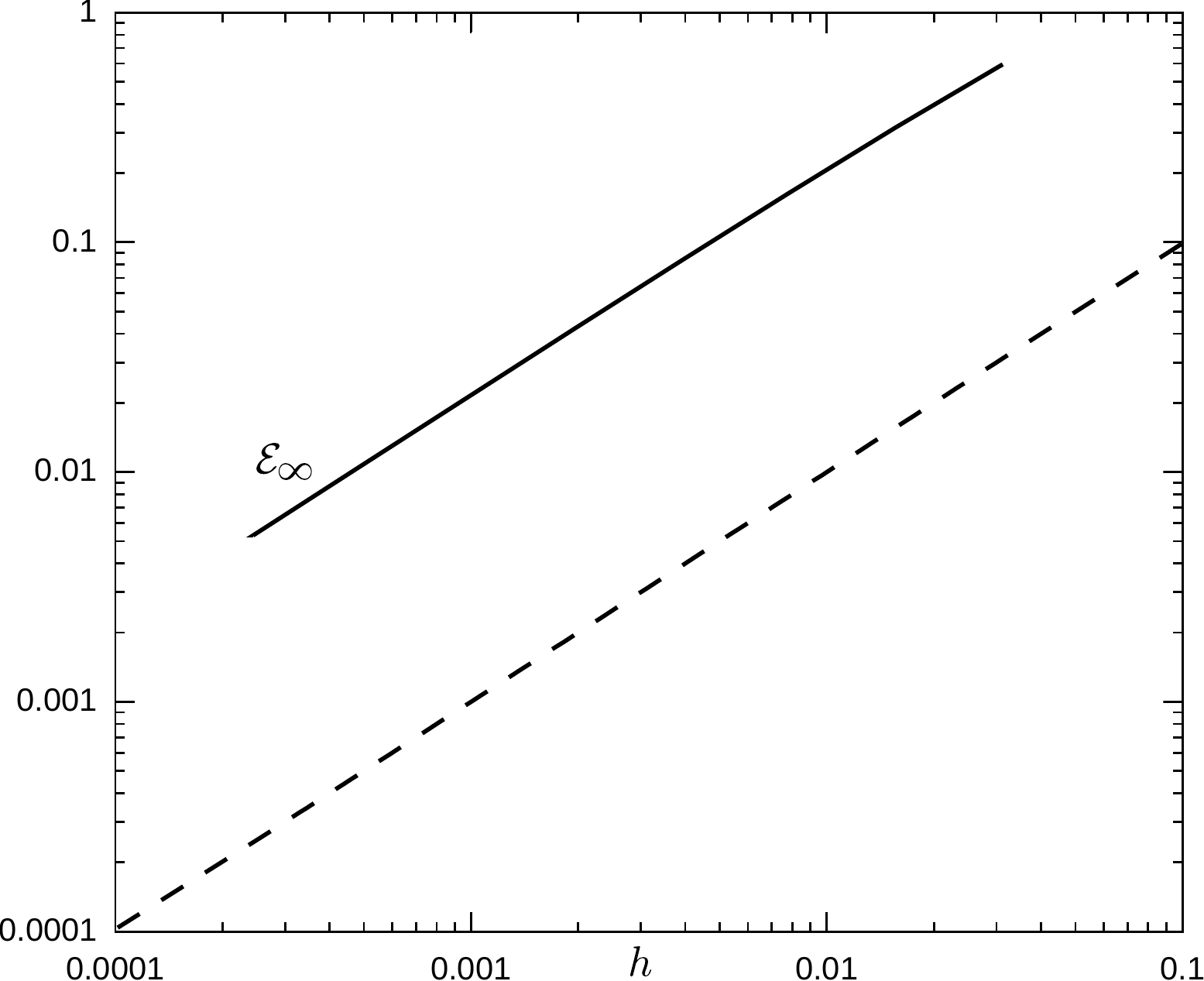} \\
(a)&(b)
\end{tabular}
\end{center}
\caption{The error $\mathcal E(P,\alpha)=|\nabla_P\bar H(P,\alpha)-\bar b(P,\alpha)|$ (a), and its $L^\infty$ norm under grid refinement compared with a straight (dashed) line of slope $1$ (b).}\label{LOGR-inf}
\end{figure}


\begin{thebibliography}{99}
\bibitem{noteachdou}  Y. Achdou, {\it Finite difference methods for mean field games} in {\it Hamilton-Jacobi equations: approximations, numerical analysis and applications} (Eds. P. Loreti and N.A. Tchou), 1--47, Lecture Notes in Math., 2074, Springer, Heidelberg, 2013.
\bibitem{be} M. Bardi and E. Feleqi, {\it Nonlinear Elliptic Systems and Mean Field Games},  NoDEA Nonlinear Differential Equations Appl. 23 (2016), no. 4, Art. 44.
\bibitem{acd}
Y.~Achdou and I.~Capuzzo~Dolcetta, {\it Mean field games: numerical methods}, SIAM J. Numer. Anal., 48(3) (2010), 1136--1162.
%\bibitem{bbook} A. Bensoussan, {\bf Perturbation methods in optimal control}, John Wiley \& Sons, Ltd., Chichester,  Gauthier-Villars, Montrouge, 1988.
%\bibitem{bf87} A. Bensoussan and J. Frehse, {\it On Bellman equation of ergodic type with quadratic  growth Hamiltonians}, in {\bf Contributions in modern calculus of variations} (Ed. L. Cesari), Pitman Research notes in Mathematics series 148, Longman, Harlow, 1987, pp 13--25.
\bibitem{bf} A. Bensoussan and J. Frehse, {\it Mean field games and mean field type control theory}, Springer Briefs in Mathematics. Springer, New York, 2013. 
\bibitem{cc16} S. Cacace and F. Camilli, {\it A generalized Newton method for homogenization of Hamilton-Jacobi equations}, SIAM Journal on Scientific Computing, 38 (2016), no. 6, A3589--A3617.
\bibitem{ccm17}S. Cacace, F. Camilli and C. Marchi, {\it A numerical method for Mean Field Games on networks}, ESAIM: Mathematical Modelling and Numerical Analysis, 51 (2017), no. 1, 63--88.
%\bibitem{cm} F. Camilli and C. Marchi,  {\it Stationary Mean field games on networks},  SIAM J. Control Optim., 54 (2016), no. 2, 1085--1103. 
%\bibitem{c} P. Cardaliaguet, {\it Weak solutions for first order mean field games with local coupling} in {\bf  Analysis and geometry in control theory and its applications} (Eds. P. Bettiol, P. Cannarsa, G. Colombo, M. Motta and F. Rampazzo), 111--158, Springer INdAM Ser., 11, Springer, Cham, 2015.
\bibitem{notecard} P. Cardaliaguet, {\it Note on mean field games}. Available at \\ \text{https://www.ceremade.dauphine.fr/~cardalia/MFG20130420.pdf.}
%\bibitem{cg} P. Cardaliaguet and P.J. Graber, {\it Mean field games systems of first order}, ESAIM Control Optim. Calc. Var. 21 (2015), no. 3, 690--722.
\bibitem{clld} P. Cardaliaguet, F. Delarue, J.-M. Lasry and  P.-L. Lions, {\it The master equation and the convergence problem in mean field games}, preprint (2015), available at arxiv.org/abs/1509.02505.
\bibitem{cllp1} P. Cardaliaguet, J.-M. Lasry, P.-L. Lions and A. Porretta, {\it  Long time average of mean field games},  Netw. Heterog. Media 7 (2012), no. 2, 279--301.
\bibitem{cllp2} P. Cardaliaguet, J.-M. Lasry, P.-L. Lions and A. Porretta, {\it Long time average of mean field games with a nonlocal coupling}, SIAM J. Control Optim. 51 (2013), no. 5, 3558--3591.
\bibitem{cp} P. Cardaliaguet and A. Porretta, {\it Long time behavior of the master equation in mean-field game theory}, arxiv preprint 2017, available at https://arxiv.org/abs/1709.04215.
\bibitem{cc} A. Cesaroni and M. Cirant, {\it  Introduction to variational methods for viscous ergodic Mean-Field Games with local coupling}, INDAM lecture notes, to appear.
\bibitem{cdm} A. Cesaroni, N. Dirr and C. Marchi, {\it Homogenization of a mean field game system  in the small noise limit}, SIAM J. Math. Anal. 48 (2016), no 4,  2701--2729. 
%\bibitem{CLM} G. Chevalier, J. Le Ny and R. Malham\'e, {\it A micro-macro traffic model based on Mean-Field Games,} 2015 American Control Conference (ACC), Chicago, IL, 2015, pp. 1983-1988.
\bibitem{cirant} M. Cirant, {\it  Stationary focusing Mean Field Games},   Comm. Partial Differential Equations 41 (2016), no. 8, 1324--1346. 
%\bibitem{f} M. Fischer, {\it On the connection between symmetric $N$-player games and mean field games},  Ann. Appl. Probab. 27 (2017), no. 2, 757--810.
\bibitem{gn}  D. A. Gomes, L.  Nurbekyan and E.  Pimentel, {\it Economic models and mean-field games theory}, IMPA Mathematical Publications, Rio de Janeiro, 2015.
\bibitem{gomespatrizi} D.A. Gomes, S. Patrizi and V. Voskanyan, {\it On the existence of classical solutions for stationary extended mean field games},  
Nonlinear Anal. 99 (2014), 49--79. 
%\bibitem{gps2}  D.A. Gomes, E. Pimentel and H. S\'anchez-Morgado,  {\it Time dependent mean-field games in the superquadratic case},  ESAIM Control Optim. Calc. Var. 22 (2016), no. 2, 562--580.
\bibitem{gbook} D. A. Gomes, E. A. Pimentel and V.  Voskanyan, {\it Regularity theory for mean-field game systems}. Springer Briefs in Mathematics. Springer, 2016. 
\bibitem{gsm} D. A. Gomes and H. S\'anchez Morgado, {\it A stochastic Evans-Aronsson problem. }
Trans. Amer. Math. Soc. 366 (2014), no. 2, 903--929. 
%\bibitem{gps} D.A. Gomes, G.E. Pires and H. S\'anchez-Morgado, {\it A-priori estimates for stationary mean-field games}, Netw. Heterog. Media 7 (2012), no. 2, 303--314. 
\bibitem{notegomes} D.A. Gomes and J. S\'aude, {\it Mean field games models: a brief survey}, Dyn. Games Appl. 4 (2014), no. 2, 110--154.
%\bibitem{LW} A. Lachapelle and M.-T. Wolfram, {\it On a mean field game approach modeling congestion and aversion in pedestrian crowds}, Transportation Research Part B: Methodological, 45(10):1572--1589, 2011.
%\bibitem{la} D. Lacker, {\it A general characterization of the Mean Field limit for stochastic differential  games}, Probab. Theory Related Fields (to appear). 
\bibitem{ll1} J.-M. Lasry and P.-L. Lions, {\it Jeux \`a champ moyen. I. Le cas stationnaire}, C. R. Math. Acad. Sci. Paris, 343 (2006),  619--625.
%\bibitem{ll2} J.-M. Lasry and P.-L. Lions, {\it  Jeux \`a champ moyen. II. Horizon fini et contr\^ole optimal}, C. R. Math. Acad. Sci. Paris, 343 (2006),  679--684.
\bibitem{ll} J.-M. Lasry and P.-L. Lions, {\it Mean field games}, Japan. J. Math. (N.S.), 2 (2007),  229--260.
%\bibitem{luk}  D. Lukkassen and P. Wall, {\it On weak convergence of locally periodic functions}. J. Nonlinear Math. Phys. 9 (2002), no. 1, 42--57.
%\bibitem{ls} P.-L. Lions, P.E. Souganidis, {\it Work in progress} (private communication)
%\bibitem{ps}  G.A. Pavliotis and A.M.  Stuart, {\bf  Multiscale methods. Averaging and homogenization},  Springer, New York, 2008.
\bibitem{pv} E. Pimentel and V. Voskanyan, {\it Regularity theory for second order stationary mean-field games},  Indiana Univ. Math. J. 66 (2017), no. 1, 1--22. 
\bibitem{q} J. Qian, {\it Two approximations for effective Hamiltonians arising from homogenization of Hamilton-Jacobi equations}, UCLA, Department of Mathematics, preprint, 2003.

\end{thebibliography}
\end{document}